\newcommand{\1}{1\!\!\!\;{\rm I}}
\newcommand{\mbR}{{\mathbb R}}
\newcommand{\pt}{\partial}
\newcommand{\wt}{\widetilde}
\newcommand{\ov}{\overline}
\newcommand{\vf}{\varphi}
\newcommand{\ve}{\varepsilon}
\newcommand{\cE}{\mathcal E}
\theoremstyle{plain}
\newtheorem{thm}{Theorem}
\newtheorem{lem}{Lemma}
\newtheorem{corl}{Corollary}
\theoremstyle{definition}
\theoremstyle{remark}
\begin{document}

\title[Remarks on differentiability in the initial data
for stochastic reflecting flow
 ]{Remarks on differentiability in the initial data
for stochastic reflecting flow}

\author{Andrey Pilipenko}
\thanks{Research is partially supported by State fund for fundamental researches of Ukraine and the Russian foundation for basic researches,
    Grant F40.1/023}
\thanks{Research is partially supported by the Grant of the President of Ukraine,
    Grant F47/033}
\address{Andrey Pilipenko: Institute of Mathematics of Ukrainian National Academy of Sciences, Tereschenkivska 3, 01601, Kiev-4, Ukraine}
\email{pilipenko.ay@yandex.ua}
\urladdr{http://imath.kiev.ua/ $\wt{ }$ apilip}

\subjclass[2000] {Primary 58J65; Secondary 60H10, 60H25}

\keywords{Stochastic reflecting flow. Differentiability of a
stochastic flow}

\begin{abstract}

Stochastic flows generated by reflected SDEs in a half-plane with an additive diffusion term  are considered.  A derivative in the initial data is represented a.s.  as an infinite product of matrices. We use this representation and  construct  an example of a reflecting flow with a linear drift such that it is not locally continuously differentiable.
\end{abstract}

\maketitle

\textbf{Introduction}

Differentiability in the initial data of flows generated by SDEs with smooth coefficients  is well-studied subject of stochastic analysis (see for example  \cite{IW, Ku}). Equations for derivatives can be  obtained by formal differentiation of initial equations. A problem of differentiability of flows generated by reflected SDEs (RSDEs) in a domain is more complicated. The  corresponding results on differentiability appeared comparatively recently.

The first paper on reflecting flows differentiability  is due to Deuschel and Zambotti \cite{DZ}. They considered reflecting flows in an orthant with additive diffusion term.They considered reflecting flows in an orthant with additive diffusion term. The approach of \cite{DZ} was generalized by Andres \cite{Andres1, Andres2} to SDEs (with additive noise) in a polyhedron or a domain with a smooth boundary. Another approach was developed by Pilipenko in   \cite{PilDAN, PilTBiMC, PilHaus,
Pil_Cosa}, where the Sobolev derivatives were studied. It also was noticed that if we are able to prove that the flow satisfies the Lipschitz property with respect to the initial data, then this implies not only Sobolev but also Frechet differentiability \cite{Pil_Cosa}. Usually the Lipschitz  property is satisfied  if a diffusion term is  constant and a drift is Lipschitzian. If the diffusion term is not  constant then the problem of Frechet differentiability is open even for $C^\infty$ coefficients.
The third approach  of investigation was proposed by Burdzy \cite{Burdzy}, who  used excursion theory to study a reflected Brownian  flow in a domain with a smooth boundary. It is worth to mention that the curvature of the boundary gives some new interesting terms  to a representation for the derivative,  that  contain a local time of a process at the boundary.

Consider an SDE in a half-spase $\mbR^d_+=\mbR^{d-1}\times[0, \infty)$ with a normal reflection at the boundary $\pt\mbR^d_+=\mbR^{d-1}\times\{0\}:$
\begin{align}
&
d\vf_t(x)=a(\vf_t(x))dt+dw(t)+nL(dt, x), \ t\geq0,
\label{eq6.1}\\
&
\vf_0(x)=x, \ x\in\mbR^d_+,\ \vf_t(x)\in \mbR^d_+, t\geq 0,
\label{eq6.2}
\end{align}
where $n=(0, \ldots, 0, 1)$ is a  normal vector to the hyperplane $\pt\mbR^d_+,$ $ w(t)$  is a Wiener process
 in  $\mbR^d$,
\begin{equation}
\label{eq6.3}
 \{L(t, x), t\geq0\} \
\mbox{ is continuous and non-decreasing in $t$ process},
\end{equation}
\begin{equation}
\label{eq6.4}
L(0, x)=0,
\end{equation}
\begin{equation}
\label{eq6.5}
\int^t_0\1_{\vf_s(x)\in\pt\mbR^d_+}L(ds, x)=L(t, x), \ t\geq0.
\end{equation}
The  last condition means that $L(t, x)$ does not increase in $t $ when $\vf_t(x)\in  \mbR^d_+\setminus\pt\mbR^d_+.$ So, a solution of the  RSDE behaves as a solution of an SDE without reflection inside the upper half-space.

Let us give informal explanation how to guess a form of an equation for the derivative in initial data (the proof of the corresponding fact is non-trivial). Since inside the upper half-space the equation behaves as usual SDE, the derivative should be obtained by formal differentiation of \eqref{eq6.1} with respect to $x$, i.e., $\pt\nabla\vf_t(x)/\pt t=\nabla a(\vf_t(x))\nabla\vf_t(x)$ if $\vf_t(x)\notin\pt\mbR^d_+.$ If $\vf_t(x)\in\pt\mbR^d_+$, then  the $d$-th coordinate of the process $\vf_t(x)$ attains a minimum (it equals zero). So, the derivative of $d$-th coordinate should be equal to 0. This requirements and some technical details are enough to determine uniquely  the derivative \cite{ Andres1, Andres2, DZ, PilTBiMC, Pil_Cosa} (see \S1 for strict statement).

Underline one important circumstance. It was proved in all  papers cited above that
\begin{equation}
\label{eq2.5!}
\forall \ x \ \forall \ t \ \ \  P(\exists\nabla\vf_t(x))=1.
\end{equation}
Note that the statements
$$
\forall \ x \ \  P(\forall \ t\geq0 \ \ \exists\nabla\vf_t(x))=1
$$
and
$$
\forall \ t\geq0 \ \  P(\forall \ x \ \ \exists\nabla\vf_t(x))=1,
$$
are, generally, incorrect. This fact is easy to explain in one-dimensional case. Let
$\vf_t(x)$ be a reflected Brownian motion. It can be checked  that
$$
\vf_t(x)=\begin{cases}
\vf_t(0)=w(t)-\min_{0\leq s\leq t}w(s), t>\sigma(x),\\
x+w(t), t\leq \sigma(x),
\end{cases}
$$
where $\sigma(x)$ is the first instant when $x+w(\cdot)$   hits zero:
$$
\sigma(x)=\inf\{t\geq0: x+w(t)=0\}.
$$
Then
$$
\frac{\pt\vf_t(x)}{\pt x}=
\begin{cases}
0, x<-\min_{0\leq s\leq t}w(s),\\
1, x>-\min_{0\leq s\leq t}w(s),\\
\mbox{does not exist}, x=-\min_{0\leq s\leq t}w(s).
\end{cases}
$$
However, for any fixed $t>0, x>0:$
\begin{equation}
\label{eq4.1}
P(\vf_t \ \mbox{is continuously differentiable in a neighborhood of} \ x)=1.
\end{equation}

This example makes reasonable a conjecture: \emph{equality \eqref{eq4.1} is always satisfied if coefficients  of the RSDE  are smooth.}  A  result of paper \cite{DZ} gives another argument in favor of this hypothesis. It was proved (additive noise, $C^1$ drift, normal reflection at hyperplanes) that there exists a modification $\psi_t(x)$  of the derivative $\nabla\vf_t(x)$  such that for all $t, x_0$
\begin{equation}
\label{eq4.2!}
 P\left(\psi_t(x_0)={\nabla\vf_t(x_0)}\right)=1,
\end{equation}
and
\begin{equation}
\label{eq4.2}
 \psi_t(x)\to\psi_t(x_0), \ x\to x_0 \
\mbox{a.s.}
\end{equation}

In \S\,2 we give an example of a flow $\vf_t(x), t\geq0, x\in\mbR^d_+,$ generated by RSDE in a half-plane $\mbR^d_+$ with normal reflection at the boundary $\pt\mbR^d_+$, additive diffusion and $C^\infty$ drift such that it is  not locally continuously differentiable flow a.s.It will be shown that this flow is not even locally differentiable   a.s.  Moreover
$$
\forall \ x \ \
P\left(\{\sigma(x)<t\}\cap\{\exists \ \mbox{a neighbourhood} \ U(x) \
\forall \ y\in U(x) \ \exists \ {\nabla\vf_t(y)}\}\right)=0,
$$
where $\sigma(x)=\inf\{t\geq0: \ \vf_t(x)\in\pt \mbR^d_+\}$.

Note that this statement neither contradicts \eqref{eq2.5!} nor contradicts \eqref{eq4.2!}, \eqref{eq4.2}.

In \S\,1 we give some preliminary formulas, in particular, the derivative
$\nabla\vf_t$  will be represented as an infinite product of matrices.

\textbf{\S\,1. Representation of the derivative on initial value for reflecting flow}

To give a representation of the derivative on the initial value for a reflecting stochastic flow, we need to introduce one type of integral equation (see Theorem \ref{thm2} below).  An equation for the derivative on the initial value is given in Theorem \ref{thm1}. The main result of this Section is Theorem \ref{thm4}, where we  express the derivative
as an infinite products of matrices.

Consider $d\times d$ matrices

$P=\begin{pmatrix}
1&0&\ldots&0\\
0&\ddots&&\vdots\\
&\ddots&1&0\\
0&&0&0
\end{pmatrix}$, $Q=E-P=\begin{pmatrix}
0&0&\ldots&0\\
0&\ddots&&\vdots\\
&\ddots&0&0\\
0&&0&1
\end{pmatrix}$.

\begin{thm}
\label{thm2}
Let  $\alpha: [0, \infty)\to\mbR^d\otimes\mbR^d$  be a continuous and bounded function taking values in the space of  $d\times d$ matrices, and
$\beta: [0, \infty)\to\mbR$ a continuous function, $\beta(0)\ne0.$
Denote
\begin{equation}
\label{eq9.0}
\sigma=\inf\{t\geq0: \ \beta(t)=0\},\ 
\tau(t)=\sup\{s\in[0, t]: \ \beta(s)=0\}.
\end{equation}
Then there exists a unique function $\gamma: [0, \infty)\to\mbR^d\otimes
\mbR^d$ satisfying the system
\begin{equation}
\label{eq9.1}
P\gamma(t)=P+\int^t_0P\alpha(s)\gamma(s)ds, \ t\geq0,
\end{equation}
\begin{equation}
\label{eq9.2}
Q\gamma(t)=\begin{cases}
Q+\int^t_0Q\alpha(s)\gamma(s)ds,  \ t<\sigma,\\
\int^t_{\tau(t)}Q\alpha(s)\gamma(s)ds, \ t\geq\sigma.
\end{cases}
\end{equation}
\end{thm}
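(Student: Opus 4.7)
The plan is to recognize (\ref{eq9.1})--(\ref{eq9.2}) as a single linear Volterra-type integral equation for $\gamma$ and solve it by a Banach fixed-point argument. Since $P+Q=E$, summing the two branches of the system gives, for $t<\sigma$,
$$
\gamma(t) = E + \int_0^t \alpha(s)\gamma(s)\,ds,
$$
and, for $t\geq\sigma$,
$$
\gamma(t) = P + \int_0^t P\alpha(s)\gamma(s)\,ds + \int_{\tau(t)}^t Q\alpha(s)\gamma(s)\,ds.
$$
In both cases $\gamma(t)$ is determined by the restriction of $\gamma$ to $[0,t]$, so the problem is of Volterra type. Because $P^2=P$, $Q^2=Q$, $PQ=QP=0$, applying $P$ (respectively $Q$) to this summed equation recovers (\ref{eq9.1}) (respectively (\ref{eq9.2})), so the summed equation is equivalent to the original system.

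Next, I would fix a horizon $T_0>0$ and take $X$ to be the space of bounded measurable $\mbR^d\otimes\mbR^d$-valued functions on $[0,T_0]$. Let $T\colon X\to X$ denote the operator obtained by inserting $\gamma\in X$ into the right-hand sides above (with the appropriate case split at $\sigma$). Using $|\alpha|\leq C$ for some constant $C$ and $[\tau(t),t]\subset[0,t]$, one gets the Volterra-type estimate
$$
|T\gamma_1(t)-T\gamma_2(t)| \leq 2C\int_0^t |\gamma_1(s)-\gamma_2(s)|\,ds.
$$
Equipping $X$ with the Bielecki norm $\|\gamma\|_\lambda=\sup_{t\leq T_0}e^{-\lambda t}|\gamma(t)|$ for $\lambda>2C$ turns $T$ into a strict contraction, so the Banach fixed-point theorem yields a unique fixed point. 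Letting $T_0\to\infty$ proves the theorem. Uniqueness also follows directly by Gronwall applied to $|\delta(t)|\leq 2C\int_0^t|\delta(s)|\,ds$ for the difference $\delta=\gamma_1-\gamma_2$ of any two solutions.

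The main subtlety I expect is that $\gamma$ is typically not continuous: at the right endpoint $b$ of an excursion interval $(a,b)$ of $\beta$, $\tau$ jumps from $a$ to $b$, so the $Q$-integral drops discontinuously to zero. For this reason one cannot work in $C([0,T_0])$ but must use bounded measurable functions. Measurability of $T\gamma$ for measurable $\gamma$ reduces to Borel measurability of $\tau$, which holds because $\tau$ is non-decreasing and right-continuous (its only discontinuities are right endpoints of excursions, where it jumps up to the diagonal). This is really the only technical point; the rest is a routine Volterra/Gronwall argument.
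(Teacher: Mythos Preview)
Your proposal is correct and follows essentially the same route as the paper. The paper packages your summed equation via an explicit map $\pi$, writing $\gamma=\pi\bigl(E+\int_0^{\cdot}\alpha(s)\gamma(s)\,ds\bigr)$, records the same factor-of-$2$ Lipschitz bound $\sup_{[0,T]}\|\pi x_1-\pi x_2\|\le 2\sup_{[0,T]}\|x_1-x_2\|$, and then appeals to Picard iteration rather than a Bielecki-norm contraction; these are cosmetic variants of the same Volterra fixed-point argument, and your remark on the c\`adl\`ag nature of $\tau$ (hence of $Q\gamma$) matches the paper's closing observation.
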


Theorem on existence and uniqueness  for a solution of such  type equation
 was proved in more general setting in
\cite{ Andres2,   IW}, 
see also
\cite{Airault}, where such equations were introduced for the first time.

So, we only give a sketch of a proof in order to explain
   difficulties that may arise and a form of representations that we will obtain.  

Assume at first that  the function $\beta$ has only finite number of zeroes
$\sigma=\sigma_0<\sigma_1<\sigma_2<\ldots<\sigma_n,$  then we can solve
\eqref{eq9.1}, \eqref{eq9.2} successively on intervals
$[0, \sigma_0), [\sigma_0, \sigma_1), [\sigma_1, \sigma_2) $  and so on.
It is easy to see that in this case   $\gamma(t)$ is obtained by successive solution of the following linear equations
\begin{equation}
\label{eq10.1}
\gamma(t)=E+\int^t_0\alpha(s)\gamma(s)ds, \ t<\sigma_0,
\end{equation}
\begin{equation}
\label{eq10.2}
\gamma(t)=P\gamma(\sigma_{k}-)+\int^t_{\sigma_k}
\alpha(s)\gamma(s)ds, \ t\in[\sigma_k, \sigma_{k+1}).
\end{equation}

Denote by $\cE_{st}$ a solution of the following matrix-valued equation
\begin{equation}
\label{eq10.20}
\begin{cases}
\frac{\pt\cE_{st}}{\pt t}=\alpha(t)\cE_{st}, \ t\geq s,\\
\cE_{ss}=E.
\end{cases}
\end{equation}

It follows from \eqref{eq10.1}, \eqref{eq10.2} that
\begin{equation}
\label{eq10.30}
\gamma(t)=\cE_{0t}, \ t<\sigma_0.
\end{equation}

If $t\in[\sigma_k, \sigma_{k+1}),$  then
\begin{equation}
\label{eq10.3}
\begin{split}
\gamma(t)=
\cE_{\sigma_kt}P\gamma(\sigma_{k^-})=
\cE_{\sigma_kt}P
\cE_{\sigma_{k-1}\sigma_k}
P\gamma(\sigma_{{k-1}^-})= \ldots\\
=
\cE_{\sigma_kt}P
\cE_{\sigma_{k-1}\sigma_k}
P\ldots P
\cE_{\sigma_{1}\sigma_0}
P
\cE_{0\sigma_0}.
\end{split}
\end{equation}

Assume now that  $\beta$ has infinite number of zeroes, for example, let $\beta$  be a typical trajectory of a Wiener process. Then we should use more delicate methods to solve
\eqref{eq9.1}, \eqref{eq9.2}. It can be done as follows. Introduce a function $\pi$ that takes a matrix-valued function   $x=x(t), t\geq0,$  to
\begin{equation}
\label{eq11.0}
(\pi x)(t)=
\begin{cases}
x(t), t<\sigma,\\
Px(t)+Q(x(t)-x(\tau(t))), t\geq\sigma,
\end{cases}
\end{equation}
where $\sigma$ and $\tau(t)$ are from \eqref{eq9.0}.

Observe that  system
\eqref{eq9.1}, \eqref{eq9.2}
is equivalent to the following
\begin{equation}
\label{eq11.1}
\gamma(t)=\pi\left(E+\int^\cdot_0\alpha(s)\gamma(s)ds\right)(t), \ t\geq0.
\end{equation}
It is easy to see that for any $T>0:$
\begin{equation}
\label{eq11.2}
\sup_{t\in[0, T]}
\|\pi x_1(t)-\pi x_2(t)\|
\leq 2\sup_{t\in[0, T]}
\|x_1(t)-x_2(t)\|.
\end{equation}

Consider successive approximations
$$
\begin{aligned}
&
\gamma_0(t)=E, \ t\geq0, \\
&
\gamma_{n+1}(t)=\pi\left(E+\int^\cdot_0\alpha(s)\gamma_n(s)ds\right)(t).
\end{aligned}
$$
From \eqref{eq11.2} and standard reasoning we can conclude that there exists a unique solution of
\eqref{eq11.1}, and successive  approximations converge to this solution
$$
\gamma(t)=\lim_{n\to\infty}\gamma_n(t).
$$

The set $\{t\geq0: \beta(t)=0\}$ is closed. Therefore, if  $x$ is continuous function then $\pi x$ has   $c\grave{a}dl\grave{a}g$ trajectories. Moreover, it is not hard to see that
$P\gamma(t)=E+P\int^t_0\alpha(s)\gamma(s)ds$ is continuous in $t$, and $Q \gamma(t)$ is $c\grave{a}dl\grave{a}g$.

Consider RSDE \eqref{eq6.1}-\eqref{eq6.5} in a half-space $\mbR^d_+=\mbR^{d-1}\times[0, \infty)$ with a normal reflection at the boundary $\pt\mbR^d_+=\mbR^{d-1}\times\{0\}.$
Assume that a function  $a: \mbR^d_+\to\mbR^d$ is continuously differentiable and its derivative is bounded. Then 
 for  any  $\omega $  there exists a unique solution \eqref{eq6.1}--\eqref{eq6.5} and this solution is continuous in
$(t, x)$ and uniformly Lipschitzian in  $x$  for $t\in[0, T]:$
$$
\forall \ T>0 \ \exists \ c \ \forall \ x_1, x_2\in\mbR^d_+ \ \forall \ t\in
[0, T]: \
\|\vf_t(x_1)-\vf_t(x_2)\|\leq c\|x_1-x_2\|.
$$
Really, the Skorokhod map in a half-plane is Lipschitzian. Since the noise is additive,  the existence, uniqueness for a solution of RSDE, and Lipschitz
property for any $\omega$ can be proved by standard arguments, see for example from
\cite{Anulova}.


Denote
\begin{equation}
\label{eq7.-1}
\sigma(x)=\inf\{t\geq0: \ \vf_t(x)\in\pt\mbR^d_+\}.
\end{equation}

\begin{thm}
\label{thm1}
For all $x\in\mbR^{d-1}\times(0, \infty), t\geq0$
$$
P(\mbox{Frechet derivative $\nabla\vf_t(x)$  exists})=1.
$$
Moreover there exists a modification $\psi_t(x)$ of the derivative, i.e.
\newline $P(\psi_t(x)=\nabla\vf_t(x))=1,$ $ t\geq0, x\in\mbR^{d-1}\times(0, \infty)$,

 such that

1) for any  $x$ the process $\psi_t(x), t\geq 0$ is $c\grave{a}dl\grave{a}g$,

2)
\begin{equation}
\label{eq7.0}
P\psi_t(x)=P+\int^t_0P\nabla a(\vf_s(x))\psi_s(x)ds, \ t\geq0,
\end{equation}
\begin{equation}
\label{eq7.1}
Q\psi_t(x)=\begin{cases}
Q+
\int^t_0Q\nabla a(\vf_s(x))\psi_s(x)ds, t<\sigma(x),\\
\int^t_{\tau(t, x)}Q\nabla a(\vf_s(x))\psi_s(x)ds, t\geq\sigma(x),
\end{cases}
\end{equation}
where $P$ and $Q$ are the same as in Theorem \ref{thm2},
\begin{equation}
\label{eq8.0}
\tau(t, x)=\sup\{s\in[0, t]: \ \vf_s(x)\in\pt\mbR^d_+\}
\end{equation}
is the last instant before  $t$ when the process  $\vf_\cdot(x)$ visit the hyperplane.
\end{thm}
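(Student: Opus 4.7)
The additive-noise assumption makes the Skorokhod map in the half-space pathwise Lipschitz, and combined with boundedness of $\nabla a$ this gives a uniform $\omega$-by-$\omega$ Lipschitz estimate $\|\vf_t(x_1)-\vf_t(x_2)\|\leq c\|x_1-x_2\|$ on compact time intervals. By the approach of \cite{Pil_Cosa} this Lipschitz property already implies Fr\'echet differentiability of $\vf_t(\cdot)$ at every fixed $(t,x)$ with probability one, so the substantive task is to \emph{identify} $\nabla\vf_t(x)$ with the candidate defined by \eqref{eq7.0}--\eqref{eq7.1}. I would split the argument at the first hitting time $\sigma(x)$.

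On the event $\{t<\sigma(x)\}$ the trajectory $\vf_\cdot(x)$ avoids $\pt\mbR^d_+$ up to time $t$, and by continuity of $\vf$ in $(s,y)$ this persists in a neighbourhood of $x$. In that neighbourhood the reflection term $L$ vanishes identically, so $\vf$ is an ordinary solution of $d\vf_s(y)=a(\vf_s(y))ds+dw(s)$, and the classical flow theorem gives \eqref{eq7.0} together with the first case of \eqref{eq7.1}. For $t\geq\sigma(x)$ I would fix $\omega$ and apply Theorem \ref{thm2} pathwise with $\alpha(s)=\nabla a(\vf_s(x))$ (continuous and bounded) and $\beta(s)=\vf^d_s(x)$ (continuous with $\beta(0)=x_d>0$), producing a candidate $\gamma(t)$ that by construction satisfies \eqref{eq7.0}--\eqref{eq7.1}, is continuous in its $P$-component, and is c\`adl\`ag in its $Q$-component. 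To check $\gamma(t)=\nabla\vf_t(x)$ I would approximate the Wiener path by smooth paths $w^n$ whose associated $\vf^n$ has only finitely many boundary hits; between consecutive hits $\vf^n$ is a classical $C^1$ flow whose derivative is given by the finite product \eqref{eq10.3}, matching $\gamma^n$ exactly.

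Passage to the limit $n\to\infty$ requires two compatible convergences: convergence of the difference quotients $h^{-1}(\vf^n_t(x+h)-\vf^n_t(x))$ to $h^{-1}(\vf_t(x+h)-\vf_t(x))$, which follows from the uniform Lipschitz bound and pathwise stability of the Skorokhod map under perturbations of the driver, and convergence $\gamma^n\to\gamma$ of the fixed points of the maps in \eqref{eq11.1}, which follows from \eqref{eq11.2}. The main obstacle is precisely that, unlike in the approximation, the zero set of a typical Brownian path is a perfect set of Hausdorff dimension $1/2$, so the explicit finite product \eqref{eq10.3} cannot be applied directly; the identification $\nabla\vf_t(x)=\gamma(t)$ must pass through this double-limit argument, with the non-expansiveness \eqref{eq11.2} of $\pi$ serving as the stability mechanism that makes the limit well-defined and the resulting $\psi_t(x)=\gamma(t)$ the required c\`adl\`ag modification.
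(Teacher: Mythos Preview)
The paper does not actually prove Theorem~\ref{thm1}: immediately after the statement it simply writes ``The proof of a differentiability and representation \eqref{eq7.0}, \eqref{eq7.1} see for example in \cite{Pil_Cosa, Andres2},'' treating the result as known. So there is no in-paper argument to compare your proposal against; your sketch is effectively an attempt to reconstruct the cited proofs.

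Your overall architecture is in line with what those references do: the pathwise Lipschitz estimate from the additive-noise Skorokhod map, the upgrade from Lipschitz to Fr\'echet differentiability (this is exactly the mechanism the paper advertises in the Introduction for \cite{Pil_Cosa}), and the identification of the derivative with the unique solution of the system in Theorem~\ref{thm2} applied with $\alpha(s)=\nabla a(\vf_s(x))$, $\beta(s)=\vf^d_s(x)$. The split at $\sigma(x)$ and the use of the contraction property \eqref{eq11.2} of $\pi$ are also the right ingredients.

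The one step I would flag is your smooth-path approximation. Replacing $w$ by smooth $w^n$ does not by itself guarantee that the reflected process $\vf^n$ has only finitely many boundary visits (a smooth driver plus a drift that pushes toward the boundary can still produce accumulation of zeros), and even when it does, deducing $\gamma^n\to\gamma$ from \eqref{eq11.2} alone is not enough: the map $\pi$ in \eqref{eq11.0} depends on the zero set of $\beta$, so you also need stability of $\tau_n(t)\to\tau(t)$ and of the whole excursion structure under the approximation, which is delicate precisely because the limiting zero set is perfect and nowhere dense. The cited proofs avoid this by working directly with the difference quotients of the true $\vf$ (or with Sobolev derivatives) and passing to the limit $h\to0$ inside the fixed-point equation \eqref{eq11.1}, rather than regularising the noise. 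If you want a self-contained argument, that route is cleaner than the double limit you outline.
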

The proof of a differentiability and representation
\eqref{eq7.0}, \eqref{eq7.1}
see for example in
\cite{Pil_Cosa, Andres2}.

{\bf Remark.}  System  \eqref{eq7.0}, \eqref{eq7.1} is a particular case of \eqref{eq9.1}, \eqref{eq9.2}, where
$\alpha(t)=
\nabla a(\vf_t(x)),$ and $\beta(t)$  is  $\vf^d_t(x)$ (the
$d$-th coordinate of the process $\vf_t(x)$).

{\bf Remark.} Assume that
$\vf_t(x)\notin\pt\mbR^d_+, t\in[t_1, t_2].$
Then there exists a neighborhood
$U(x)$  of $x$  such that
$\vf_t(y)\notin\pt\mbR^d_+, t\in[t_1, t_2], y\in U(x).$ So, $\vf_t(y)$ satisfies the following integral equation
\begin{equation}
\label{eq8.1}
\vf_t(y)=\vf_{t_1}(y)+\int^t_{t_1}a(\vf_s(y))ds+w(t)-w(t_1), \
t\in[t_1, t_2], \ y\in U(x).
\end{equation}
Representations \eqref{eq7.0}, \eqref{eq7.1}  imply that
$$
\frac{\pt\psi_t(y)}{\pt t}=\nabla a(\vf_t(y))\psi_t(y), \ t\in[t_1, t_2].
$$
In particular,
$$
\psi_t(x)=E+\int^t_0\nabla a(\vf_s(x))\psi_s(x)ds, \ t<\sigma(x),
$$
as it should be for a derivative in the initial data of integral equation \eqref{eq8.1}.

The main  aim of this Section is to obtain a  representation
of  \eqref{eq7.0}, \eqref{eq7.1} solution, which is similar to \eqref{eq10.30}, \eqref{eq10.3}.
  We prove the corresponding result in  general settings for equations \eqref{eq9.1}, \eqref{eq9.2}.

Note that a product of matrices depends on the order of the product. Thus we need a formal definition and sufficient condition for   convergence of infinite product.


Let $K$ be a countable set with a linear order $\leq.$ Let us introduce a partial order on finite subsets of  $K$  as follows
$$
L_1\preccurlyeq L_2\overset{\mbox{def}}{\Leftrightarrow}L_1\subset L_2.
$$

Let  $X$ be a Banach space, $\{A_k, k\in K\}$ be a collection of linear continuous operators on $X$   (for example,   $X=\mbR^n,$
$A_k$ is $d\times d$ matrix).

Let
$L=\{l_1, \ldots, l_n\}\subset K,  \ l_1\geq\ldots\geq l_n.$  By  $\prod_{k\in L}A_k$ denote  a product
$A_{l_1}A_{l_2}\ldots A_{l_n}$  (operators with greater indices are on the left).

\textbf{Definition 1.} An infinite product $\prod_{k\in K}A_k$  converges and equals a linear continuous operator $U$  if
$$
\forall \ \ve>0 \ \exists \ L_0\subset K,
|L_0|<\infty \ \forall
L\succcurlyeq L_0:
\ \ \|\prod_{k\in L}A_k-U\|<\ve,
$$
where $|\cdot|$ is a number of elements in a set, $\|\cdot\|$ is a norm of a linear operator.

{\bf Remark.}  Definition 1 means a convergence of generalized sequence of matrices
$\{\prod_{k\in L}A_k, |L|<\infty, L\subset K\}$, where partial order is
$\preccurlyeq.$

{\bf Remark.} We do not require the non-degeneracy of a limit in contrast to the usual definition of infinite product of numbers.

\begin{thm}
\label{thm3}
 Assume that $A_k=E+B_k, k\in K,$ where linear operators
$B_k$  are such that
$$
\sum_{k\in K}\|B_k\|<\infty.
$$
Then the infinite product $\prod_{k\in K}A_k$  converges.
\end{thm}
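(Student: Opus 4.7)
The plan is to verify the Cauchy property for the generalized sequence $\bigl\{\prod_{k\in L}A_k : L\subset K,\ |L|<\infty\bigr\}$ and then to invoke completeness of the Banach space of bounded linear operators on $X$. The key algebraic observation is that, by distributivity and the convention that greater indices stand to the left,
\[
\prod_{k\in L}A_k \;=\; \prod_{k\in L}(E+B_k) \;=\; \sum_{S\subseteq L}\prod_{k\in S}B_k,
\]
where each inner product is taken in decreasing order and the empty product equals $E$. Submultiplicativity of the operator norm gives the uniform a priori bound
\[
\Bigl\|\prod_{k\in L}A_k\Bigr\|\;\leq\;\prod_{k\in L}(1+\|B_k\|)\;\leq\;\exp\Bigl(\sum_{k\in K}\|B_k\|\Bigr)=:M<\infty.
\]

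Next I establish the key Cauchy estimate. For any finite $L_0\subseteq L\subset K$, the subsets $S\subseteq L_0$ in the expansion above reproduce exactly $\prod_{k\in L_0}A_k$, so only subsets $S$ with $S\cap(L\setminus L_0)\neq\varnothing$ contribute to the difference. Taking norms and factoring out the contribution from $L_0$,
\[
\Bigl\|\prod_{k\in L}A_k-\prod_{k\in L_0}A_k\Bigr\|\;\leq\;\prod_{k\in L_0}(1+\|B_k\|)\cdot\Bigl(\prod_{k\in L\setminus L_0}(1+\|B_k\|)-1\Bigr)\;\leq\;M\bigl(e^{r(L_0)}-1\bigr),
\]
where $r(L_0)=\sum_{k\in K\setminus L_0}\|B_k\|$.

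Finally, the summability hypothesis allows me to choose, for any $\varepsilon>0$, a finite $L_0$ making $r(L_0)$ arbitrarily small, hence the bound $M(e^{r(L_0)}-1)$ arbitrarily small uniformly in $L\succcurlyeq L_0$. For any two $L_1,L_2\succcurlyeq L_0$, applying the estimate to the pairs $(L_1,L_1\cup L_2)$ and $(L_2,L_1\cup L_2)$ and combining by the triangle inequality gives $\bigl\|\prod_{k\in L_1}A_k-\prod_{k\in L_2}A_k\bigr\|\leq 2M(e^{r(L_0)}-1)$, proving that the net is Cauchy. By completeness of the space of bounded linear operators on $X$ the net has a limit $U$, and the same estimate (with $L_2$ replaced by $L_0$ and letting $L_1\supseteq L_0$ vary) shows $\|\prod_{k\in L}A_k-U\|\to 0$ in the sense of Definition 1. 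I do not foresee a serious obstacle; the one point requiring care is that every finite product must respect the common linear order on $K$, so that the expansion identity that drives the Cauchy estimate is actually valid, and that the limit is in general allowed to be degenerate (singular or even zero), in agreement with the remark following Definition 1.
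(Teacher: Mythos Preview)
Your proof is correct. The overall architecture---prove a quantitative Cauchy estimate for the net $\{\prod_{k\in L}A_k\}$ and invoke completeness of $\mathcal{L}(X)$---matches the paper's, but the way you obtain the estimate is genuinely different. The paper writes the difference $\prod_{L_1}A_k-\prod_{L_0}A_k$ (with $L_0\subset L_1$) as a telescoping sum in which one extra factor $E+B_m$, $m\in L_1\setminus L_0$, is peeled off at each step; this yields the bound $\bigl(\sum_{j\notin L_0}\|B_j\|\bigr)\exp\{\sum_{k\in K}\|B_k\|\}$. You instead fully expand $\prod_{k\in L}(E+B_k)=\sum_{S\subseteq L}\prod_{k\in S}B_k$ and observe that the terms with $S\subseteq L_0$ reproduce $\prod_{L_0}A_k$, so only subsets meeting $L\setminus L_0$ survive in the difference; summing the scalar majorants exactly gives $\prod_{k\in L_0}(1+\|B_k\|)\bigl(\prod_{k\in L\setminus L_0}(1+\|B_k\|)-1\bigr)\leq M(e^{r(L_0)}-1)$. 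The two bounds are asymptotically equivalent as $r(L_0)\to 0$. Your expansion argument is a bit more transparent and avoids the bookkeeping of the telescoping; the paper's telescoping, on the other hand, never needs to write out the $2^{|L|}$-term sum and would adapt more readily if the factors were not all of the form $E+B_k$. Either route is perfectly adequate here.
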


{\bf Remark.}  A sum of real-valued series with non-negative terms is independent of the order of summation.

\begin{proof}[Proof of Theorem \ref{thm3}]
Note that for any collections of operators $\{C_k\}, \{D_k\}$ the following inequality holds
\begin{equation}
\label{eq14.1}
\begin{split}
&\|(E+D_0)(E+C_1)(E+D_1)(E+C_2)\cdot\ldots\cdot (E+C_n)(E+D_n)-\\
&
-
(E+C_1)\cdot\ldots\cdot(E+C_n)\|=\\
&=\|\sum_k\Bigl[\Bigl((E+D_0)(E+C_1)(E+D_1)\cdot\ldots\cdot (E+C_{k-1})(E+D_{k-1})-\\
&
-(E+D_0)(E+C_1)(E+D_1)\cdot\ldots\cdot (E+C_{k-1})\Bigr)(E+D_{k})\cdot\dots(E+D_{n})\Bigr]\|\leq\\
&\leq
(\sum^n_{k=0}\|D_k\|)\prod^n_{k=0}(1+\|D_k\|)\prod^n_{k=1}(1+\|C_k\|)\leq\\
&\leq
(\sum^n_{k=0}\|D_k\|)\exp\{\sum^n_{k=0}\|D_k\|+\sum^n_{k=1}\|C_k\|\}.
\end{split}
\end{equation}
Let

\noindent $L_0\subset L_1,\ $ $L_0=\{l_1,\ldots, l_n\},$ 

\noindent $L_1=\{m_{0,1},\dots, m_{0, k_0}, l_1,
m_{1,1}, \dots, m_{1, k_1}, l_2,
m_{2,1}, \dots, m_{2, k_2}, l_n,
m_{n,1}, \dots, m_{n, k_n}
 \},$

\noindent where elements in parenthesis are in decreasing order. It follows from
\eqref{eq14.1}  that
\begin{equation}
\label{eq15.1}
\begin{split}
\|\prod_{k\in L_0}(E+B_k)-\prod_{k\in L_1}(E+B_k)\|\leq\\
\leq
(\sum^n_{i=0}\sum^{k_i}_{j=0}\|B_{m_{i,j}}\|)
\exp
\left\{
\sum^n_{i=0}\sum^{k_i}_{j=0}\|B_{m_{i,j}}\| +\sum^n_{i=1}\|B_{l_i}\|
\right\}\leq\\
\leq
\sum_{j\notin L_0}\|B_j\|\cdot\exp\left\{\sum_{k\in K}\|B_k\|\right\}.
\end{split}
\end{equation}

Let $\ve>0$ be fixed. Choose $L_0$ such that
$$
\sum_{j\notin L_0}\|B_j\|\exp\left\{\sum_{k\in K}\|B_k\|\right\}<\ve.
$$
Thus \eqref{eq15.1} implies that for any  $L_1, L_0\subset L_1:$
$$
\|\prod_{k\in L_0}A_k-\prod_{k\in L_1}A_k\|\leq \ve.
$$
So $\{\prod_{k\in L}A_k, |L|<\infty, L\subset K\}$ is a generalized Cauchy sequence. This implies  a
convergence of the product (see \cite[Ch.1,
\S\,7]{DCS}).

Theorem \ref{thm3} is proved.
\end{proof}

Let us consider equations \eqref{eq9.1}, \eqref{eq9.2}. Represent a set $\{t\geq0: \ \beta(t)\ne0\}$ as a denumerable union of disjoint sets
$[0, \sigma_0)\cup\cup_k(\sigma_k, \tau_k),$ where possibly 
$\sigma_0=\infty$  or $\tau_k=\infty$  for some $k.$  Introduce a linear order in the set  $K=\{(\sigma_k; \tau_k)\}$ of intervals:
$$
(\sigma_i, \tau_i)<(\sigma_j, \tau_j)\Leftrightarrow \tau_i<\sigma_j.
$$

The main results of this Section is the next Theorem and   Corollary.

\begin{thm}
\label{thm4}
Assume that
\begin{equation}
\label{eq15.3}
\lambda(\{t\geq0: \ \beta(t)=0\})=0,
\end{equation}
where $\lambda$ is a Lebesgue measure. Then a solution of   system \eqref{eq9.1},
\eqref{eq9.2} is of the form
\begin{equation}
\label{eq15.2}
\gamma(t)=\begin{cases}
\cE_{0t}, \ t<\sigma_0,\\
\cE_{\tau(t)\; t}P\left(
\prod_{(\sigma_i \tau_i)\subset[0, t]}(P\cE_{\sigma_i \tau_i}P)
\right)\cE_{0 \sigma_0}, t\geq\sigma_0,
\end{cases}
\end{equation}
where $\cE_{s t}$  is defined in \eqref{eq10.20}.
\end{thm}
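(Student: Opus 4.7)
The plan is to show that the function $\wt\gamma$ defined by the right-hand side of \eqref{eq15.2} is well-defined and satisfies the system \eqref{eq9.1}, \eqref{eq9.2}; uniqueness in Theorem \ref{thm2} will then identify $\wt\gamma$ with $\gamma$.

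For the convergence of the product, set $A_i:=P\cE_{\sigma_i\tau_i}P$. From \eqref{eq10.20}, $\cE_{\sigma_i\tau_i}-E=\int_{\sigma_i}^{\tau_i}\alpha(s)\cE_{\sigma_i s}\,ds$, so $\|\cE_{\sigma_i\tau_i}-E\|\leq c(\tau_i-\sigma_i)$ with $c$ depending only on $T$ and $\|\alpha\|_\infty$. Since the intervals $(\sigma_i,\tau_i)$ are pairwise disjoint in $[0,T]$, we have $\sum_i(\tau_i-\sigma_i)\leq T$. Each $A_i$ satisfies $PA_i=A_iP=A_i$, hence kills $\mathrm{Ker}\,P$ and acts on $V:=\mathrm{Im}\,P$ as $I_V+B_i$ with $\|B_i\|\leq c(\tau_i-\sigma_i)$. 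A verbatim application of Theorem \ref{thm3} inside $\mathrm{End}(V)$ then yields convergence of $\prod_i A_i|_V$, which lifts to convergence of $\prod_i A_i$ in $\mathrm{End}(\mbR^d)$.

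Next, for $t\geq\sigma_0$ with $t\in(\sigma_k,\tau_k)$, I note that $\tau(t)=\sigma_k$ and that the product in \eqref{eq15.2} runs exactly over those $(\sigma_i,\tau_i)$ with $\tau_i\leq\sigma_k$; denoting the corresponding converging product times $\cE_{0\sigma_0}$ by $\Gamma_k$, the formula reduces to $\wt\gamma(t)=\cE_{\sigma_k t}P\Gamma_k$ on $(\sigma_k,\tau_k)$. Differentiation gives $\wt\gamma'(t)=\alpha(t)\wt\gamma(t)$ on this interval, while $Q\wt\gamma(\sigma_k+)=QP\Gamma_k=0$ and $P\wt\gamma(\sigma_k+)=P\Gamma_k$, so that
\[
P\wt\gamma(t)=P\Gamma_k+\int_{\sigma_k}^t P\alpha(s)\wt\gamma(s)\,ds,\qquad Q\wt\gamma(t)=\int_{\sigma_k}^t Q\alpha(s)\wt\gamma(s)\,ds
\]
on each excursion -- the correct local form of \eqref{eq9.1}, \eqref{eq9.2}. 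Continuity of $t\mapsto P\wt\gamma(t)$ at a right endpoint $\tau_k$ of an excursion follows from $P^2=P$ via $P\wt\gamma(\tau_k-)=P\cE_{\sigma_k\tau_k}P\Gamma_k=P\wt\gamma(\tau_k)$.

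The remaining ingredients are (i) continuity of $P\wt\gamma$ at accumulation points of excursion endpoints and (ii) assembling the local identities into the global ones \eqref{eq9.1}, \eqref{eq9.2}. For (i), I will use the Cauchy-tail estimate \eqref{eq15.1} from the proof of Theorem \ref{thm3} applied to the excursions accumulating at such a point, together with continuity of $\cE_{s\cdot}$. For (ii), the hypothesis $\lambda(\{\beta=0\})=0$ allows decomposing the integrals in \eqref{eq9.1}, \eqref{eq9.2} as sums over excursion pieces, and the local identities telescope, with boundary contributions cancelling thanks to continuity of $P\wt\gamma$ from (i) and to $Q\wt\gamma(\sigma_i+)=0$. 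I expect (i) to be the main obstacle: upgrading the Cauchy-tail property of partial products to continuity of $P\wt\gamma$ requires combining \eqref{eq15.3} and the estimate underlying Theorem \ref{thm3} uniformly across infinitely many small excursions clustering at a single zero of $\beta$.
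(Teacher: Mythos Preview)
Your argument is sound in outline, but it takes a genuinely different route from the paper. The paper does not work directly with the limiting object $\wt\gamma$ on the fractal zero set. Instead it chooses an increasing sequence of finite subfamilies $K_n\uparrow K$ of excursion intervals, defines $\gamma_n$ by the finite-product formula \eqref{eq10.3} built from $K_n$, and observes that each $\gamma_n$ satisfies an \emph{exact} system of the form \eqref{eq9.1}--\eqref{eq9.2} with $\alpha$ replaced by $\alpha_n:=\alpha\cdot\1_{\cup K_n\cup[0,\sigma_0)}$. Convergence $\gamma_n\to\wt\gamma$ is Theorem \ref{thm3} plus $\tau_n(t)\to\tau(t)$; the hypothesis \eqref{eq15.3} enters only to give $\alpha_n\to\alpha$ for $\lambda$-a.e.\ $t$, after which dominated convergence passes the integral identities to the limit. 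Uniqueness finishes.

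The trade-off: the paper's approximation scheme never has to confront an accumulation point of excursions, so your steps (i) and (ii) simply do not arise. In your approach, step (ii) is more delicate than the word ``telescope'' suggests. For a zero set like that of Brownian motion there is no well-ordering of the excursions compatible with the real order, so you cannot sum the local increments $P\wt\gamma(\tau_i-)-P\wt\gamma(\sigma_i+)$ by induction. What actually makes (ii) work is showing that $P\wt\gamma$ is \emph{absolutely continuous} on $[0,T]$ (not merely continuous): on each excursion its derivative is $P\alpha\wt\gamma$, bounded uniformly, and the Cauchy-tail estimate \eqref{eq15.1} controls the variation contributed by clusters of small excursions. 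Once $P\wt\gamma$ is AC, the function $G(t):=P+\int_0^t P\alpha\wt\gamma-P\wt\gamma$ is AC with $G'=0$ $\lambda$-a.e.\ (by \eqref{eq15.3}), hence constant, hence zero. This is a clean way to close your argument, but it is more work than the paper's finite-approximation-plus-DCT route.
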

{\bf Remark.} Notice that  $\lambda(\{t\geq0: \ \vf^d_t(x)=0\})=0$ a.s., where
$\vf^d_t(x)$ is $d$-th coordinate of the process $\vf_t(x).$  So, conditions of Theorem \ref{thm4} are satisfied for the solution of \eqref{eq7.0}, \eqref{eq7.1}
for a.a. $\omega$.  
Combining Theorems \ref{thm1} and \ref{thm4}  we obtain the following statement.

\begin{corl}
\label{corl11}
Let $\vf_t(x)$ be a solution of \eqref{eq6.1} -- \eqref{eq6.5}. Then for all $t\geq0, x\in\mbR^{d-1}\times(0, \infty)$  with probability 1 we have
\begin{equation}
\label{eq16.0}
\nabla\vf_t(x)=\begin{cases}
\cE_{0 t}(x), \ t<\sigma(x), \\
\cE_{\tau(t, x)\; t}(x)
P\prod_{(\sigma_k(x), \tau_k(x))\subset[0, t]}
(P\cE_{\sigma_k(x) \tau_k(x)}P)\cE_{0 \sigma(x)}, \ t\geq\sigma(x),
\end{cases}
\end{equation}
where $\cE_{st}(x)$ is a solution of
\begin{equation}
\label{eq16.1}
\begin{cases}
\frac{\pt}{\pt t}\cE_{s t}(x)=\nabla a(\vf_t(x))\cE_{s t}(x), t\geq s,\\
\cE_{s s}(x)=E,
\end{cases}
\end{equation}
$\sigma(x), \tau(t, x)$  are defined in  \eqref{eq7.-1} and \eqref{eq8.0} respectively, and
$\{(\sigma_k(x), \tau_k(x))\}$ is a collection of disjoint intervals such that
\begin{equation}
\label{eq_dop32}
\cup_k(\sigma_k(x), \tau_k(x))=\{t>\sigma(x): \ \vf_t(x)\notin\pt\mbR^d_+\}.
\end{equation}
\end{corl}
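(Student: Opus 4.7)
The plan is to derive Corollary \ref{corl11} by applying Theorem \ref{thm4} pathwise to the integral system characterized by Theorem \ref{thm1}, with the natural inputs arising from the reflected flow. Once the hypotheses of Theorem \ref{thm4} are verified on a full-measure event, the representation \eqref{eq15.2} directly translates into \eqref{eq16.0}.

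First I would fix $t\geq 0$ and $x\in\mbR^{d-1}\times(0,\infty)$. By Theorem \ref{thm1}, on a full-measure event the Frechet derivative $\nabla\vf_t(x)$ exists and coincides with $\psi_t(x)$, where $\psi_\cdot(x)$ solves \eqref{eq7.0}--\eqref{eq7.1}. As noted in the remark after Theorem \ref{thm1}, this is precisely system \eqref{eq9.1}--\eqref{eq9.2} with the pathwise inputs
$$
\alpha(s)=\nabla a(\vf_s(x)),\qquad \beta(s)=\vf^d_s(x),
$$
and the uniqueness statement in Theorem \ref{thm2} identifies $\psi_\cdot(x)$ as the unique such $\gamma$ for each $\omega$.

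Second, I would verify the hypotheses of Theorem \ref{thm4} pathwise on a further full-measure event. Boundedness and continuity of $\alpha$ follow from the assumption that $\nabla a$ is continuous and bounded, combined with pathwise continuity of $s\mapsto\vf_s(x)$ (itself a consequence of Lipschitz continuity of the Skorokhod map applied to the additive perturbation). Continuity of $\beta$ is clear, and $\beta(0)=x^d>0$ by hypothesis on $x$. The remaining condition \eqref{eq15.3}, namely $\lambda(\{s\geq 0:\vf^d_s(x)=0\})=0$ a.s., is the classical occupation-density fact for the continuous semimartingale $\vf^d_\cdot(x)=x^d+\int_0^\cdot a^d(\vf_r(x))dr+w^d(\cdot)+L(\cdot,x)$: its quadratic variation is $ds$, and the occupation-time formula applied to $\1_{\{0\}}$ gives $\int_0^t\1_{\{\vf^d_s(x)=0\}}\,ds=0$ a.s.

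Third, on the intersection of the above full-measure events Theorem \ref{thm4} applies and yields \eqref{eq15.2} for $\gamma=\psi_\cdot(x)$. The remaining step is a translation of notation: $\sigma_0$ in Theorem \ref{thm4} becomes $\sigma(x)$ from \eqref{eq7.-1}; the family $\{(\sigma_i,\tau_i)\}$ of maximal intervals on which $\beta\ne 0$ after $\sigma_0$ matches $\{(\sigma_k(x),\tau_k(x))\}$ characterized by \eqref{eq_dop32}; $\tau(t)$ becomes $\tau(t,x)$ from \eqref{eq8.0}; and the resolvent $\cE_{st}$ of \eqref{eq10.20} with the above $\alpha$ coincides with $\cE_{st}(x)$ from \eqref{eq16.1}. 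Substituting and replacing $\psi_t(x)$ by $\nabla\vf_t(x)$ yields \eqref{eq16.0}.

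The bulk of the work is hidden inside Theorem \ref{thm4}, which must establish convergence of the infinite matrix product (via Theorem \ref{thm3} and the bound $\sum_i\|\cE_{\sigma_i\tau_i}-E\|\leq C\sum_i(\tau_i-\sigma_i)\leq CT$ provided by disjointness of the excursion intervals in $[0,T]$) and then pass to the limit from the finite-zero formula \eqref{eq10.3}, e.g., by approximating $\beta$ by a function whose zero set absorbs all excursions of length below $1/n$; here condition \eqref{eq15.3} is precisely what ensures the approximating integral equations converge back to \eqref{eq9.1}--\eqref{eq9.2}. Given Theorem \ref{thm4}, the corollary itself is then an essentially immediate pathwise specialization.
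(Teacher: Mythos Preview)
Your proposal is correct and follows essentially the same route as the paper: the paper derives Corollary~\ref{corl11} by combining Theorem~\ref{thm1} (identifying $\nabla\vf_t(x)$ with the solution $\psi_\cdot(x)$ of \eqref{eq7.0}--\eqref{eq7.1}) with Theorem~\ref{thm4} applied pathwise, after noting in a remark that $\lambda(\{s:\vf^d_s(x)=0\})=0$ a.s.\ so that hypothesis \eqref{eq15.3} holds. You have simply made the verification of hypotheses explicit (including an occupation-time argument for \eqref{eq15.3}) and spelled out the dictionary between the notation of Theorem~\ref{thm4} and that of the corollary.
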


\begin{proof}[Proof of Theorem \ref{thm4}]
The result of Theorem \ref{thm4} is obvious if a number  of intervals
$(\sigma_k, \tau_k)$ is finite (see representation  \eqref{eq10.3} and observe that  $P^2=P$). Therefore, further we consider only the case when the corresponding number of intervals is countable.

Select a sequence  $\{K_n\}_{n\geq1}\subset K,$
$K_n=\{(\sigma^{(n)}_i, \tau^{(n)}_i), i=\ov{1,n}\}$  such that
$$
K_n\subset K_{n+1},  \ \cup_nK_n=K, \ \sigma^{(n)}_1<\sigma^{(n)}_2<
\ldots<\sigma^{(n)}_n.
$$
Set
$$
\gamma_n(t)=
\cE_{\tau_n(t)\;t}P\prod_{(\sigma^{(n)}_i, \tau^{(n)}_i)\subset[0, t]}
(P\cE_{\sigma^{(n)}_i \tau^{(n)}_i}P)\cE_{0 \sigma_0},  \ t\geq\sigma_0,
$$
and
$\gamma_n(t)=\cE_{0 t}$ if $t\in[0, \sigma_0).$
Here $\tau_n(t):=\max\{\tau^{(n)}_k: \ \tau^{(n)}_k\leq t
\}.$

  By $\ov{\gamma}(t)$ denote the right-hand side of   \eqref{eq15.2}.
Let us verify that $\ov{\gamma}(t)$ is well-defined and
\begin{equation}
\label{eq17.00}
\lim_{n\to\infty}\gamma_n(t)=\ov{\gamma}(t).
\end{equation}
At first, observe that  \eqref{eq15.3} implies the convergence
$\tau_n(t)\to\tau(t), n\to\infty,$ as $t\geq\sigma_0.$  Thus
$$
\cE_{\tau_n(t)\;t}\to\cE_{\tau(t)\;t}, \  n\to\infty.
$$

Let us prove that
\begin{equation}
\label{eq17.0}
\prod_{(\sigma^{(n)}_i, \tau^{(n)}_i)\subset[0, t]}
(P\cE_{\sigma^{(n)}_i \tau^{(n)}_i
}P)\to
\prod_{(\sigma_i, \tau_i)\subset[0, t]}
(P\cE_{\sigma_i \tau_i}P), \ n\to\infty.
\end{equation}

Observe that for any $d\times d$-matrix   $A$ the matrix $PAP$ can be considered as a linear operator   from $\mbR^{d-1}\times\{0\}$  to $\mbR^{d-1}\times\{0\}.$ In particular, $P$ acts as an identity operator in $\mbR^{d-1}\times\{0\},$ and
\begin{equation}
\label{eq17.1dop}
\|PAP-P\|=\|P(A-E)P\|\leq \|A-E\|.
\end{equation}

For all $s\leq t$ we have an estimate
$$
\|\cE_{st}\|\leq \exp\{c(t-s)\},
$$
where $\|\cdot\|$  is a norm of matrix considered as a linear operator  in
$\mbR^d, \ c=\sup_{r\geq0}\|\alpha(r)\|.$  So, for any $T>0$ there is a constant $K=K(T)$ such that
\begin{equation}
\label{eq17.1}
\forall \ s, t\in[0; T], s\leq t: \
\|\cE_{s t}-E\|=\|\int^t_s\alpha(z)\cE_{s z}dz\|\leq
K(t-s).
\end{equation}

Now Theorem \ref{thm3}, \eqref{eq17.1dop},  and  \eqref{eq17.1}  imply
\eqref{eq17.0}. Hence \eqref{eq17.00} is proved.

Let us prove now that  $\ov{\gamma}(t)$ satisfies  \eqref{eq9.1}. 

Denote  $\alpha_n(t)=\alpha(t)\1_{t\in\cup_k[\sigma_k^{(n)}, \tau_k^{(n)})\cup[0,
\sigma_0)}.$ Observe that
$$
\frac{d\gamma_n(t)}{dt}=\alpha(t)\gamma_n(t), \
t\in\cup^n_{k=1}(\sigma^{(n)}_k, \tau^{(n)}_k)\cup[0, \sigma_0)
$$
and
$$
\gamma_n(\sigma^{(n)}_{k+1}-)=\gamma_n(\tau^{(n)}_k),
\gamma_n(\sigma^{(n)}_{k+1})=P\gamma_n(\tau^{(n)}_k).
$$
So
\begin{equation}
\label{eq19.1}
P\gamma_n(t)=P+P\int^t_0\alpha_n(s)\gamma_n(s)ds, \ t\geq0,
\end{equation}
\begin{equation}
\label{eq19.2}
Q\gamma_n(t)=\begin{cases}
Q+Q\int^t_0\alpha_n(s)\gamma_n(s)ds, \ t<\sigma_0,\\
Q\int^t_{\tau_n(t)}\alpha_n(s)\gamma_n(s)ds, \ t\geq\sigma_0.
\end{cases}
\end{equation}

It follows from \eqref{eq15.3}  that  $\alpha_n(t)\to\alpha(t),
n\to\infty,$  for  $\lambda$-a.a. $t\geq0.$  Thus, the Lebesgue dominated convergence theorem and
\eqref{eq17.00}  yield
$$
P\ov{\gamma}(t)=\lim_{n\to\infty}P\ov{\gamma}_n(t)=
\lim_{n\to\infty}
(P+P\int^t_0\alpha_n(s)\gamma_n(s)ds)=
$$
$$
=P+P\int^t_0\alpha(s)\ov{\gamma}(s)ds,
$$
i.e. $\ov{\gamma}(t)$ satisfies  \eqref{eq9.1}.

Let us show that  $\ov{\gamma}(t)$   satisfies \eqref{eq9.2}. Let $t\geq\sigma_0$ (the case $t\in[0;\sigma_0)$ is trivial).  Since
$\lim_{n\to\infty}\tau_n(t)=\tau(t)$, using the Lebesgue theorem again, we get
$$
Q\gamma_n(t)=Q\int^t_{\tau_n(t)}\alpha_n(s)\gamma_n(s)ds=
$$
$$
=
Q\int^t_0\1_{s\in[\tau_n(t),t]}\alpha_n(s)\gamma_n(s)ds
\underset{n\to\infty}
{\rightarrow}
Q\int^t_0\1_{s\in[\tau(t),t]}\alpha(s)\ov{\gamma}(s)ds
=
$$
$$
=
Q\int^t_{\tau(t)}\alpha(s)\ov{\gamma}(s)ds,
$$
i.e.
$$
Q\ov{\gamma}(t)=
Q\int^t_{\tau(t)}\alpha(s)\ov{\gamma}(s)ds.
$$
Thus, $\ov{\gamma}(t) $ satisfies  \eqref{eq9.1}, \eqref{eq9.2}.
Uniqueness of  \eqref{eq9.1}, \eqref{eq9.2} solution implies the equality $\ov{\gamma}(t)=\gamma(t).$  Theorem \ref{thm4}  is proved.

{\bf Remark.}  Let $\vf_t(x)$  be a solution of reflected SDE 
$$
d\vf_t(x)=a(\vf_t(x))dt+\sum^m_{k=1}\sigma_k(\vf_t(x))dw_k(t)+nL(dt, x), \
t\geq0, x\in\mbR^d,
$$
where conditions \eqref{eq6.2} -- \eqref{eq6.5} are also satisfied. Assume that functions $a, \sigma_k$  are continuously differentiable and have bounded derivatives. Then the Sobolev derivative $\nabla\vf_t(\cdot)$ exists a.s. (see \cite{PilTBiMC}) and there is a   modification
 $\psi_t(x)$ of the derivative such that
\begin{equation}
\label{eq31.111}
\psi_t(x)=\pi\left(
E+
\int^\cdot_0\nabla a(\vf_s(x))\psi_s(x)ds+
\sum^m_{k=1}
\int^\cdot_0\nabla \sigma_k(\vf_s(x))\psi_s(x)dw_k(s)\right)(t),
\end{equation}
where $\pi$ is defined in \eqref{eq11.0}.

The author does not know a result on representation of \eqref{eq31.111} solution as a product \eqref{eq16.0},  where $\cE_{s t}(x)$ is a stochastic exponent,
$$
\cE_{s t}(x)=E+
\int^t_s\nabla a(\vf_z(x))\cE_{s z}(x)dz+
\sum^m_{k=1}\int^t_s\nabla \sigma_k(\vf_z(x))\cE_{s z}(x)dw_k(z).
$$
In this case Theorem \ref{thm3} is inapplicable.   
 It is possible that representation \eqref{eq16.0}  is not satisfied.

\end{proof}

\textbf{\S\, 2. An example of a reflecting flow that is not locally differentiable}

Consider a reflecting flow  $\vf_t(x), t\geq0, x=(x_1, x_2)\in\mbR^2_+$  in  a half-plane that satisfies \eqref{eq6.1}--\eqref{eq6.5} with
$a(x)=Ax,$ where $A=\begin{pmatrix}1&1\\
1&1
\end{pmatrix}:$
\begin{equation}
\label{eq21.1}
d\vf_t(x)=A\vf_t(x)dt+dw(t)+nL(dt, x).
\end{equation}

In coordinate form  equation  \eqref{eq21.1} can be written as follows
\begin{equation}
\label{eq21.2}
d\vf^1_t(x)=(\vf^1_t(x)+\vf^2_t(x))dt+dw_1(t),
\end{equation}
\begin{equation}
\label{eq21.3}
d\vf^2_t(x)=(\vf^1_t(x)+\vf^2_t(x))dt+dw_2(t)+L(dt, x).
\end{equation}
In this case the operator $\cE_{s t}$  from \eqref{eq16.1} is non-random and it is equal  to
$\cE_{s t}=\cE_{t-s},$  where
$$
\cE_t=e^{At}=\begin{pmatrix}
\frac{e^{2t}+1}{2}&\frac{e^{2t}-1}{2}\\
\frac{e^{2t}-1}{2}&\frac{e^{2t}+1}{2}
\end{pmatrix}.
$$

Let $\sigma=\sigma(x), \tau(t)=\tau(t, x),$
$[\sigma_k,\tau_k]=[\sigma_k(x), \tau_k(x)]$
are the same as in \eqref{eq_dop32}.

Denote $\Delta_k=\Delta_k(x)=\tau_k(x)-\sigma_k(x).$  Then for a.a.
$\omega\in \{t\geq\sigma(x)\}$  representation
\eqref{eq16.0} has a form
$$
\nabla\vf_t(x)=
\begin{pmatrix}
\frac{e^{2(t-\tau(t, x))}+1}{2}&0\\
\frac{e^{2(t-\tau(t, x))}-1}{2}&0
\end{pmatrix}
\
\begin{pmatrix}
\prod_{[\sigma_k, \tau_k]\subset(0, t)}\frac{e^{2\Delta_k}+1}{2}&0\\
0&0
\end{pmatrix}
\
\begin{pmatrix}
\frac{e^{2\sigma(x)}+1}{2}&\frac{e^{2\sigma(x)}-1}{2}\\
0&0
\end{pmatrix}.
$$
In particular,
\begin{equation}
\label{eq22.1}
\frac{\pt\vf^1_t(x)}{\pt x_1}=
\frac{e^{2(t-\tau(t))}+1}{2}
\frac{e^{2t\sigma}+1}{2}
\prod_{[\sigma_k, \tau_k]\subset(0, t)}
\frac{e^{2\Delta_k}+1}{2}.
\end{equation}

{\bf Remark.} The  product   in the right-hand side of \eqref{eq22.1} is a product of numbers (and not a product of matrices as in general case of \S\,1). So, generally, the order of the product is inessential.

 By $f_t(x)=f_t(x_1, x_2), \ t\geq\sigma(x),$ denote the right-hand side of  \eqref{eq22.1}.  
   Set $f_t(x)=({e^{2t}+1})/{2}$   for $t<\sigma(x)$.

The main result of this Section is contained in the following two theorems.

\begin{thm}
\label{thm5}
For all  $t>0$

1) for all $x_2>0$  and a.a. $\omega\in\Omega$ the function
$x_1\mapsto f_t(x_1, x_2)$  is nondecreasing in $x_1$.

2) For  any $x=(x_1,x_2), \ x_2>0$ and a.a. $\omega\in\{\sigma(x)<t\}$ a function
$f_t(\cdot)$ is discontinuous in any neighborhood of $x.$
%

Moreover, for any $x=(x_1,x_2), \ x_2>0$ and $\delta>0$:

$P\Bigl(\{ \mbox{a function }   f_t(\cdot,x_2)  \mbox{ does not have a jump  discontinuity on }(x_1-\delta, x_1+\delta)\}\cap\{\sigma(x)<t\}\Bigr)=0$.

\end{thm}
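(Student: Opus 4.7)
The plan is first to rewrite $f_t$ in a form that exposes its product structure. Using $(e^{2s}+1)/2=e^{s}\cosh s$ together with the a.s.\ identity $\sigma(x)+(t-\tau(t,x))+\sum_{k}\Delta_{k}(x)=t$, which holds because the Lebesgue measure of the zero set of the $d$-th coordinate vanishes, one obtains
\[
f_{t}(x)=e^{t}\,\cosh\sigma(x)\,\cosh(t-\tau(t,x))\!\!\!\!\!\prod_{[\sigma_{k},\tau_{k}]\subset(0,t)}\!\!\!\!\!\cosh\Delta_{k}(x)\;=:\;e^{t}\,G(x).
\]
Both statements then reduce to structural properties of $G$.

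For part~1 I would combine componentwise monotonicity of the flow with superadditivity of $\cosh$. Since the drift matrix $A$ has nonnegative entries and the noise in \eqref{eq21.1} is additive, standard comparison for RSDEs in a half-space (cf.\ \cite{Anulova}) gives $\varphi_{s}^{i}((x_{1}',x_{2}))\ge\varphi_{s}^{i}((x_{1},x_{2}))$ componentwise for every $s\ge 0$ a.s.\ whenever $x_{1}'\ge x_{1}$. In particular the zero set $Z((x_{1}',x_{2}))\subseteq Z((x_{1},x_{2}))$, where $Z(y):=\{s\in[0,t]:\varphi_{s}^{2}(y)=0\}$, so each excursion interval of $\varphi^{2}((x_{1}',x_{2}))$ is (mod a null set) a disjoint union of excursion intervals of $\varphi^{2}((x_{1},x_{2}))$. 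From $\cosh(a+b)=\cosh a\cosh b+\sinh a\sinh b\ge\cosh a\cosh b$ for $a,b\ge 0$, applying this factor by factor over the coarser decomposition yields $G((x_{1}',x_{2}))\ge G((x_{1},x_{2}))$, proving monotonicity of $f_{t}$.

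For part~2 I would produce jumps of $f_{t}(\cdot,x_{2})$ from jumps of $\sigma(\cdot,x_{2})$. By part~1 the map $y_{1}\mapsto\sigma((y_{1},x_{2}))$ is non-decreasing and hence has one-sided limits everywhere. At a discontinuity $y_{1}^{\ast}\in(x_{1}-\delta,x_{1}+\delta)$ with left limit $\sigma^{-}$ and right limit $\sigma^{+}>\sigma^{-}$, the initial piece $[0,\sigma((y_{1},x_{2})))$ extends across this jump in $y_{1}$ by an interval of length $\sigma^{+}-\sigma^{-}$, absorbing (up to a null set) excursions of total length $\sigma^{+}-\sigma^{-}$. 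The corresponding factors of $G$ pass from $\cosh\sigma^{-}\cdot\prod_{j}\cosh\Delta_{k_{j}}$ to $\cosh(\sigma^{-}+\sum_{j}\Delta_{k_{j}})=\cosh\sigma^{+}$, strictly larger, and by part~1 no cancellation with the other factors of $G$ is possible, so $f_{t}$ has a strict jump at $y_{1}^{\ast}$. It therefore suffices to show that, on $\{\sigma(x)<t\}$ and for every $\delta>0$, a.s.\ $\sigma(\cdot,x_{2})$ has a jump in $(x_{1}-\delta,x_{1}+\delta)$.

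The main obstacle is establishing this density of jumps. In the decoupled toy model $\varphi_{t}=y+w(t)+L(t)$, the first-hit time $\sigma(y)=\inf\{t:y+w(t)=0\}$ is a stable-$1/2$ subordinator in $y$, hence pure-jump with dense jumps a.s. In the coupled system \eqref{eq21.2}--\eqref{eq21.3} the linearity of the pre-hitting-time dynamics gives the explicit representation $\varphi_{s}^{2}((y_{1},x_{2}))=y_{1}(e^{2s}-1)/2+x_{2}(e^{2s}+1)/2+R_{s}$ with $R_{s}$ a noise-driven process independent of $y_{1}$, so $\sigma((y_{1},x_{2}))$ equals the first time the fixed curve $s\mapsto R_{s}+x_{2}(e^{2s}+1)/2$ meets the smooth moving barrier $s\mapsto-y_{1}(e^{2s}-1)/2$. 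After a time change turning the fixed curve into a standard Brownian motion, the $y_{1}$-dependence of this first meeting time should inherit the stable-subordinator pure-jump structure, yielding dense jumps a.s. Combined with the implication above this gives part~2 and the ``moreover'' clause.
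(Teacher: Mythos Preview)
Your part~1 is correct and is essentially the paper's argument: both combine the zero-set inclusion $Z((x_1',x_2))\subset Z((x_1,x_2))$ for $x_1'>x_1$, which follows from coordinatewise monotonicity of the flow, with the supermultiplicativity $\tfrac{e^{a+b}+1}{2}>\tfrac{e^{a}+1}{2}\cdot\tfrac{e^{b}+1}{2}$ (equivalently your $\cosh(a+b)\ge\cosh a\cosh b$). The $e^{t}$--$\cosh$ factorisation is a convenient bookkeeping device but not a different idea.

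Part~2, however, has a genuine gap. You reduce everything to the claim that $y_{1}\mapsto\sigma((y_{1},x_{2}))$ has jumps in every subinterval, and you justify this only heuristically: after a time change the first-passage problem ``should inherit the stable-subordinator pure-jump structure''. But $R_{s}$ is an Ornstein--Uhlenbeck--type Gaussian process, not a Brownian motion; after any time change turning the fixed curve into a Brownian motion, the barriers $s\mapsto -y_{1}(e^{2s}-1)/2$ become a $y_{1}$-indexed family of curved, non-parallel graphs in the new time variable, and the resulting first-passage process in $y_{1}$ has no reason to have independent increments or to be pure-jump. You have not proved density of jumps of $\sigma$, and this is not a routine detail.

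The paper avoids this problem altogether by manufacturing jumps of $f_{t}$ at \emph{interior} touching times rather than at jumps of the first hitting time. By Girsanov the zero set of $\varphi^{2}_{\cdot}(x^{0})$ is a.s.\ a perfect set (no isolated points). For any $x_{1}\in(x_{1}^{0}-\varepsilon,x_{1}^{0})$, strict zero-set inclusion together with perfectness forces some excursion interval $(\sigma_{k}(x^{0}),\tau_{k}(x^{0}))$ to contain two disjoint excursion intervals $(\sigma_{j},\tau_{j})$, $(\sigma_{l},\tau_{l})$ of $x=(x_{1},x_{2}^{0})$. Setting $K_{\alpha}=\{s\in[\tau_{j},\sigma_{l}]:\varphi^{2}_{s}((\alpha,x_{2}^{0}))=0\}$ and $\bar{x}_{1}=\sup\{\alpha:K_{\alpha}\neq\varnothing\}\in[x_{1},x_{1}^{0})$, compactness of the nested family $\{K_{\alpha}\}$ produces $\bar{t}\in[\tau_{j},\sigma_{l}]$ with $\varphi^{2}_{\bar{t}}((\bar{x}_{1},x_{2}^{0}))=0$ while $\varphi^{2}_{s}((\bar{x}_{1}+\epsilon,x_{2}^{0}))>0$ throughout $[\tau_{j},\sigma_{l}]$. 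Thus at $\bar{x}_{1}$ an excursion of length at least $\sigma_{l}-\tau_{j}$ splits at $\bar{t}$, and the jump $f_{t}(\bar{x}_{1}{+},x_{2}^{0})-f_{t}(\bar{x}_{1},x_{2}^{0})$ is bounded below by a strictly positive quantity independent of $\epsilon$. This yields a jump in $(x_{1}^{0}-\varepsilon,x_{1}^{0})$ without any subordinator-type analysis.
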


\begin{thm}
\label{thm5_1}
For any $t>0$
$$
\begin{aligned}
&P\Bigl( \mbox{derivative} \ \frac{\pt\vf^1_t(\cdot)}{\pt x_1} \ \mbox{exists for all points  of some }\\
& \mbox{ non-empty open subset of} \ \{x\in\mbR^2_+: \sigma(x)<t\}\Bigr)=0.
\end{aligned}
$$
\end{thm}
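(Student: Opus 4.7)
I would reduce Theorem \ref{thm5_1} to showing that for every open rectangle $R=(a,b)\times(c,d)\subset\mbR\times(0,\infty)$ with rational coordinates the event
$$
E_R=\{R\subset\{x:\sigma(x)<t\}\}\cap\{\pt\vf^1_t(y)/\pt x_1\text{ exists for every } y\in R\}
$$
has probability zero; since every non-empty open subset of $\{\sigma<t\}$ contains such a rectangle, countable additivity then yields the conclusion.

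The heart of the argument combines a fundamental theorem of calculus for Lipschitz functions with Theorem \ref{thm5}(1). Fix a rational $x_2^0\in(c,d)$. The map $\vf^1_t(\cdot,x_2^0)$ is a.s.\ Lipschitz and hence absolutely continuous in $x_1$, while Theorem \ref{thm1} together with \eqref{eq22.1} gives $P((\pt\vf^1_t/\pt x_1)(s,x_2^0)=f_t(s,x_2^0))=1$ for each fixed $s$. A Fubini argument on $\Omega\times[a,b]$ therefore produces a full-probability event on which
$$
\vf^1_t(x_1,x_2^0)-\vf^1_t(a,x_2^0)=\int_a^{x_1}f_t(s,x_2^0)\,ds,\qquad x_1\in[a,b].
$$
Intersecting these events over the countably many rational $x_2^0\in(c,d)$ yields a single full-probability event $A$ on which this identity holds simultaneously for every rational $x_2^0\in(c,d)$. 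On $E_R\cap A$, for each rational $x_2^0\in(c,d)$ the indefinite integral $F_{x_2^0}(x_1):=\int_a^{x_1}f_t(s,x_2^0)\,ds$ is differentiable at every point of $(a,b)$. Because $f_t(\cdot,x_2^0)$ is monotone nondecreasing by Theorem \ref{thm5}(1), its only possible discontinuities are jumps, and the one-sided derivatives of $F_{x_2^0}$ at $x_1$ are $f_t(x_1{-},x_2^0)$ and $f_t(x_1{+},x_2^0)$; pointwise differentiability everywhere on $(a,b)$ therefore forces $f_t(\cdot,x_2^0)$ to be continuous on all of $(a,b)$.

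To close the argument I would choose a single rational $x_2^0\in(c,d)$, set $x^*=((a+b)/2,x_2^0)$ and $\delta=(b-a)/3$, and observe that on $E_R\cap A$ one simultaneously has $\sigma(x^*)<t$ and $f_t(\cdot,x_2^0)$ has no jump discontinuity on $(x_1^*-\delta,x_1^*+\delta)\subset(a,b)$; the ``moreover'' clause of Theorem \ref{thm5} declares this combined event to have probability zero, so $P(E_R)=0$. The step I expect to be the most delicate is the Fubini identification of the slice derivative with $f_t$ for a \emph{specific} rational $x_2=x_2^0$: the ACL characterization of $2$-dimensional Lipschitz functions identifies slice and partial derivatives only for a.e.\ slice, but the pointwise-in-$x$ a.s.\ identity $\pt\vf^1_t(x)/\pt x_1=f_t(x)$ supplied by Theorem \ref{thm1} bypasses this obstruction, since it permits Fubini on $\Omega\times[a,b]$ along the specific slice of our choosing.
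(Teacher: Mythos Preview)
Your proof is correct and follows essentially the same route as the paper's: a Fubini argument to identify $\partial\varphi^1_t/\partial x_1$ with $f_t$ Lebesgue-a.e.\ along a horizontal slice, monotonicity of $f_t(\cdot,x_2)$ to make $\varphi^1_t(\cdot,x_2)$ convex, and the observation that an everywhere-existing derivative of a convex function cannot jump while Theorem~\ref{thm5} says $f_t$ does. Your reduction to countably many rational rectangles with a pre-chosen rational $x_2^0$ is slightly tidier than the paper's ``for a.a.\ $\omega$ there exists $x_2$ near $x_2^0$'' formulation, but the mathematical content is identical.
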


%
%

To prove the Theorems we need the following statement on the monotonicity of the flow  $\{\vf_t(x)\}.$

\begin{lem}
\label{lem1}
The flow $\vf_t(\cdot)$ is monotonous in
$x$ in the following sense.

1) If $x^i=(x_1^{(i)}, x_2^{(i)})\in\mbR^2_+, \ i=1,2$
are such that
$x^{(1)}_1\leq x_1^{(2)}, $
$x^{(1)}_2\leq x_2^{(2)}, $ then with probability 1
\begin{equation}
\label{eq23.1}
\vf^1_t(x^1)\leq\vf^1_t(x^2), \
\vf^2_t(x^1)\leq\vf^2_t(x^2)
\end{equation}
for all  $t\geq0.$

2) If we have at least one strict inequality
 $x^{(1)}_1< x_1^{(2)}$ or
$x^{(2)}_2< x_2^{(2)}, $ then
\begin{equation}
\label{eq24.2_1}
\vf^1_t(x^1)<\vf^1_t(x^2), \ t\geq 0,
\end{equation}
$$
\sigma(x^1)<\sigma(x^2),
$$
and
\begin{equation}
\label{eq24.2}
\vf^2_t(x^1)<\vf^2_t(x^2)
\end{equation}
for all $t$ such that $\vf^2_t(x^2)>0.$
\end{lem}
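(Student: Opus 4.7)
The plan is to work pathwise with the bounded-variation differences
$$u_t:=\vf^1_t(x^2)-\vf^1_t(x^1),\qquad v_t:=\vf^2_t(x^2)-\vf^2_t(x^1).$$
Since the noise in \eqref{eq21.2}--\eqref{eq21.3} is additive and common to both trajectories, the Brownian terms cancel pathwise and
$$u_t=a+\int_0^t(u_s+v_s)\,ds,\qquad v_t=b+\int_0^t(u_s+v_s)\,ds+M_t,$$
where $a:=x_1^{(2)}-x_1^{(1)}\ge0$, $b:=x_2^{(2)}-x_2^{(1)}\ge0$, and $M_t:=L(t,x^2)-L(t,x^1)$. The structural property of $M$ I would exploit is the sign rule forced by \eqref{eq6.5}: on $\{v_s<0\}$ one has $\vf^2_s(x^1)>\vf^2_s(x^2)\ge0$, so $L(\cdot,x^1)$ cannot grow and $dM_s=dL(s,x^2)\ge0$; symmetrically $dM_s\le0$ on $\{v_s>0\}$.

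For part 1 I would apply the Lebesgue--Stieltjes chain rule to $u_t^-$ and $v_t^-$. Since $u$ and $v$ are continuous of bounded variation, the local time at $0$ vanishes and
$$u_t^-=-\int_0^t\1_{u_s<0}(u_s+v_s)\,ds,\qquad v_t^-\le-\int_0^t\1_{v_s<0}(u_s+v_s)\,ds,$$
the contribution $-\int_0^t\1_{v_s<0}\,dM_s$ being discarded as non-positive by the sign rule. A four-case inspection of the signs of $(u,v)$ gives the pointwise bound $-[\1_{u<0}+\1_{v<0}](u+v)\le 2(u^-+v^-)$, hence $u_t^-+v_t^-\le 2\int_0^t(u_s^-+v_s^-)\,ds$; Gronwall forces $u_t^-=v_t^-=0$, which is \eqref{eq23.1}.

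For part 2 I would separate three items. First, for strictness of $u$: because $u,v\ge0$ the integrand $u_s+v_s$ is non-negative, so $u_t\ge a$; if $a>0$ this already gives $u_t>0$ for all $t$, while if $a=0$ and $b>0$ then $v_0=b>0$ and continuity yield $u_t=\int_0^t(u_s+v_s)\,ds>0$ for every $t>0$. Second, for $\sigma(x^1)<\sigma(x^2)$: on $[0,\sigma(x^1))$ we have $\vf^2_s(x^1)>0$, so $L(\cdot,x^1)\equiv0$ and hence $M_t=L(t,x^2)\ge0$, giving $v_t\ge b+\int_0^t(u_s+v_s)\,ds$; whichever of $a,b$ is strictly positive then delivers $v_{\sigma(x^1)}>0$, i.e.\ $\vf^2_{\sigma(x^1)}(x^2)>0$, so $\sigma(x^2)>\sigma(x^1)$. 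Third, for strictness of $v$ on $\{\vf^2_t(x^2)>0\}$: suppose $t^*>0$ satisfied $v_{t^*}=0$ and $\vf^2_{t^*}(x^2)>0$. Then $\vf^2_{t^*}(x^1)=\vf^2_{t^*}(x^2)>0$ as well, so by continuity both $L(\cdot,x^i)$ are constant in a neighborhood of $t^*$ and $v$ satisfies the ODE $\dot v=u+v$ there; combined with $u_{t^*}>0$ from the first item, integration backwards yields $v_s<0$ just before $t^*$, contradicting part 1.

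The main subtlety is ensuring the local-time term in Tanaka's formula drops out --- this is precisely what the additive common noise buys us, because it makes $u$ and $v$ of bounded variation. Without that cancellation the two-dimensional Gronwall comparison above would not close: as is well known, the one-dimensional Skorokhod reflection map is not monotone in its driver, so a reduction that treats $\vf^2$ alone via Skorokhod is unavailable, and one really has to couple $u$ and $v$ as above.
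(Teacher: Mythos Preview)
Your argument is correct and carries out explicitly what the paper only sketches: for part~1 the paper simply cites \cite{PieraMazumdar}, and for part~2 it invokes the elementary comparison Lemma~\ref{lem1_2} together with the observation that $\vf^2(x^2)$ obeys the unreflected equation on intervals where it is positive; your route via the differences $u,v$, the sign rule for $dM$, and the Gronwall bound on $u^-+v^-$ is a direct, self-contained implementation of the same pathwise comparison idea.
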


The proof of the first statement can be done similarly to  \cite{PieraMazumdar}.

The proof of \eqref{eq24.2_1} follows from the next obvious lemma.

%
%

\begin{lem}
\label{lem1_2} Assume that continuous functions  $v, \xi_i, g_i,\ i=1,2,$ are such that
$$
\xi_i(t)=\xi_i(0)+\int^t_0g_i(s)ds+v(t), \ t\geq0,
$$
and $\xi_1(0)\leq\xi_2(0),\ g_1(s)\leq g_2(s), s\in[0;t].$ Suppose that either   $\xi_1(0)<\xi_2(0)$ or there exists a point $s_0\in[0;t]$ such that  $ g_1(s_0)< g_2(s_0)$. Then $ \xi_1(t)< \xi_2(t)$.
\end{lem}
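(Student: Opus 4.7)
The plan is immediate: subtract the two integral equations to cancel the common additive term $v(t)$. Setting $\eta(t) := \xi_2(t)-\xi_1(t)$, the hypothesis yields
$$
\eta(t) = \bigl(\xi_2(0)-\xi_1(0)\bigr) + \int_0^t \bigl(g_2(s)-g_1(s)\bigr)\,ds.
$$
Both summands are nonnegative by assumption, so in particular $\eta(t)\geq 0$. It remains to exhibit a strictly positive contribution under either alternative.

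In the first case, if $\xi_1(0)<\xi_2(0)$, then the initial gap alone gives $\eta(t)>0$, since the integral is nonnegative.

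In the second case, suppose $g_1(s_0)<g_2(s_0)$ for some $s_0\in[0,t]$. By continuity of $g_1$ and $g_2$ the strict inequality $g_1<g_2$ persists on a relative neighborhood of $s_0$ in $[0,t]$ of positive Lebesgue measure; together with $g_2-g_1\geq 0$ everywhere on $[0,t]$, this forces $\int_0^t(g_2-g_1)\,ds>0$, hence $\eta(t)>0$.

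The only mildly delicate point is the case $s_0\in\{0,t\}$, where one must use a one-sided neighborhood of $s_0$ inside $[0,t]$; this is harmless since a one-sided interval still has positive measure. I do not anticipate any real obstacle: the only structural feature being used is that $\xi_1$ and $\xi_2$ are driven by the \emph{same} noise $v$, which makes their difference a purely deterministic-looking, absolutely continuous function whose sign is controlled by the pointwise comparison of the drifts $g_i$ and of the initial values.
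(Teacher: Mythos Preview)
Your argument is correct and is exactly the obvious subtraction argument the paper has in mind; in fact the paper does not spell out a proof at all, simply calling the lemma ``obvious'' and moving on. There is nothing to add.
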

The rest of the proof of Lemma \ref{lem1} follows from Lemma \ref{lem1_2} and the following observation. If 
$\vf^2_z(x^2)>0, z\in (t_0;t_1),$ then $\vf^2_z(x^2)$ satisfies the integral equation without reflection on $[t_0,t_1]$:
$$
\vf^2_t(x)=\vf^2_{t_1}(x)+\int_{t_0}^t(\vf^1_z(x)+\vf^2_z(x))dz+(w_2(t)-w_2(t_0)), \ t\in[t_0,t_1].
$$

{\bf Remark.} Generally speaking, it is not difficult to prove deterministic analogue of Lemma \ref{lem1}, where $w$ is an arbitrary continuous function in equation
 \eqref{eq21.1}.

Lemma \ref{lem1} yields the following.

\begin{corl}
\label{corl1}
Let $x_1<y_1.$  Then for any 
$x_2>0, t\geq 0,$ we have inclusion of sets
$$
\{s\in[0;t]: \ \vf^2_s(x_1, x_2)=0\}\supset\{s\in[0;t]: \vf^2_s(y_1, x_2)=0\}.
$$
Moreover, if  $t>\sigma(x_1,x_2)$, then this inclusion is strict.
\end{corl}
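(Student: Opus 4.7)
\textbf{Proof plan for Corollary \ref{corl1}.}

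The plan is to deduce both statements directly from Lemma \ref{lem1}. First, for the inclusion, I would apply part 1) of Lemma \ref{lem1} to the initial points $(x_1,x_2)$ and $(y_1,x_2)$, which are componentwise ordered since $x_1<y_1$. This gives $\vf^2_s(x_1,x_2)\leq \vf^2_s(y_1,x_2)$ for all $s\geq 0$ a.s. Because both processes live in $\mbR^2_+$, their second coordinates are nonnegative; hence if $\vf^2_s(y_1,x_2)=0$ then automatically $\vf^2_s(x_1,x_2)=0$. That is exactly the claimed set inclusion.

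For the strict part, assume $t>\sigma(x_1,x_2)$. By part 2) of Lemma \ref{lem1} applied to the same pair of initial conditions (with strict inequality in the first coordinate), one has
\[
\sigma(x_1,x_2)<\sigma(y_1,x_2).
\]
Set $s_0=\sigma(x_1,x_2)$. Then $s_0\in[0,t]$, $\vf^2_{s_0}(x_1,x_2)=0$ by definition of $\sigma$, while $s_0<\sigma(y_1,x_2)$ forces $\vf^2_{s_0}(y_1,x_2)>0$ (indeed, $\sigma(y_1,x_2)$ is the first zero of $\vf^2_\cdot(y_1,x_2)$, and the initial value $x_2>0$). Consequently $s_0$ belongs to the left-hand set but not the right-hand one, so the inclusion is strict.

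The only real content is the use of Lemma \ref{lem1}: part 1) for the weak inequality and part 2) for the separation $\sigma(x_1,x_2)<\sigma(y_1,x_2)$. Neither step presents an obstacle since Lemma \ref{lem1} has been established just above; the argument is essentially two lines once the lemma is invoked.
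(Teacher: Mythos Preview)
Your proposal is correct and matches the paper's approach exactly: the paper simply states that Corollary~\ref{corl1} is a direct consequence of Lemma~\ref{lem1}, and you have spelled out precisely the two-line deduction (part~1) for the inclusion via nonnegativity, part~2) for strictness via $\sigma(x_1,x_2)<\sigma(y_1,x_2)$) that the paper leaves implicit.
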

\begin{lem} 
\label{lem4}
1) For all  $a_1, a_2>0:$
\begin{equation}
\label{eq26.0}
\frac{e^{a_1}+1}{2}
\cdot
\frac{e^{a_2}+1}{2}
<
\frac{e^{a_1+a_2}+1}{2}.
\end{equation}

2) For all $\{a_n, n\geq1\}\subset(0, \infty),\ \sum_{n\geq1}a_n<\infty:$
\begin{equation}
\label{eq26.1}
\prod_{n\geq1}
\frac{e^{a_n}+1}{2}
<
\frac{\exp\{\sum_{n\geq1}a_n\}+1}{2}.
\end{equation}
\end{lem}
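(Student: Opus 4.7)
\textbf{Proof plan for Lemma \ref{lem4}.}

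For part 1, the plan is to cross-multiply and reduce the claim to a trivially positive quantity. Writing $\tfrac{e^{a_1}+1}{2}\cdot\tfrac{e^{a_2}+1}{2} = \tfrac{e^{a_1+a_2}+e^{a_1}+e^{a_2}+1}{4}$ and $\tfrac{e^{a_1+a_2}+1}{2} = \tfrac{2e^{a_1+a_2}+2}{4}$, the inequality \eqref{eq26.0} is equivalent to
\[
e^{a_1+a_2}-e^{a_1}-e^{a_2}+1 \;>\;0,
\]
which factors as $(e^{a_1}-1)(e^{a_2}-1)>0$ and holds because $a_1,a_2>0$.

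For part 2, I first extend part 1 to finite products by induction on $N$: if $S_N:=\sum_{n=1}^N a_n$, then
\[
P_N \;:=\; \prod_{n=1}^N\frac{e^{a_n}+1}{2}\;\le\;\frac{e^{S_N}+1}{2},
\]
with strict inequality as soon as $N\ge 2$. The inductive step is immediate: multiply the bound $P_N\le\tfrac{e^{S_N}+1}{2}$ by $\tfrac{e^{a_{N+1}}+1}{2}$ and apply part 1 with the two positive exponents $S_N$ and $a_{N+1}$.

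Next I pass to the limit $N\to\infty$. The infinite product converges since each factor equals $1+\tfrac{e^{a_n}-1}{2}$ and $\sum_n(e^{a_n}-1)/2<\infty$ (because $a_n\to 0$ and $e^{a_n}-1\sim a_n$, with $\sum a_n<\infty$). Taking limits in the finite inequality only yields $\prod_{n\ge 1}\tfrac{e^{a_n}+1}{2}\le\tfrac{e^S+1}{2}$, so the key step is to recover strictness. The plan is to peel off the first two factors: write
\[
\prod_{n\ge 1}\frac{e^{a_n}+1}{2}
\;=\;\Bigl(\frac{e^{a_1}+1}{2}\cdot\frac{e^{a_2}+1}{2}\Bigr)\cdot\prod_{n\ge 3}\frac{e^{a_n}+1}{2}
\;\le\;\frac{e^{a_1}+1}{2}\cdot\frac{e^{a_2}+1}{2}\cdot\frac{e^{T}+1}{2},
\]
where $T=\sum_{n\ge 3}a_n$ and the last bound comes from the non-strict version of the finite inequality applied to the tail. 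Then part 1, applied twice with strictly positive exponents (first to $a_1,a_2$, then to $a_1+a_2$ and $T$), gives
\[
\frac{e^{a_1}+1}{2}\cdot\frac{e^{a_2}+1}{2}\cdot\frac{e^{T}+1}{2}
\;<\;\frac{e^{a_1+a_2}+1}{2}\cdot\frac{e^{T}+1}{2}
\;<\;\frac{e^{a_1+a_2+T}+1}{2}
\;=\;\frac{e^{S}+1}{2},
\]
which is \eqref{eq26.1}. The only subtle point is this recovery of strictness in the limit, and the two-factor peel-off above resolves it cleanly by reducing everything back to the strict scalar inequality of part 1.
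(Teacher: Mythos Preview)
Your proof is correct and follows essentially the same route as the paper, which merely says that part 1 is ``trivial'' and that part 2 ``follows from \eqref{eq26.0} by passing to a limit,'' adding that the strict inequality survives the limit. Your explicit two-factor peel-off is exactly the kind of argument that justifies that last assertion, so you have simply filled in the details the paper omits.
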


The first statement is trivial. Inequality \eqref{eq26.1}
follows from \eqref{eq26.0} by passing to a limit. It is easy to see that we obtain the strict inequality  in a limit.

 Corollary \ref{corl1} and Lemma \ref{lem4} yield the following  statement.
\begin{corl}
\label{corl2}
Let  $x_1<y_1$  and $t>0.$  Then
$$
f_t(x_1, x_2)\leq f_t(y_1, x_2),
$$
where $f_t$ is the right-hand side of  \eqref{eq22.1}.

If $t>\sigma(x_1,x_2)$, then
\begin{equation}
\label{eq26.3}
f_t(x_1, x_2)< f_t(y_1, x_2).
\end{equation}
\end{corl}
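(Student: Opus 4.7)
The plan is to repackage $f_t(x)$ as a single convergent product indexed by the maximal open intervals of positivity of $\vf^2_\cdot(x)$ on $[0,t]$, and then compare these intervals for $x=(x_1,x_2)$ and $y=(y_1,x_2)$ using Corollary \ref{corl1}. Concretely, I would set $P(x):=\{s\in[0,t]:\vf^2_s(x)>0\}$, which is open in $[0,t]$ by continuity of $\vf^2_\cdot(x)$; its connected components are $[0,\sigma(x))$, the finished excursions $(\sigma_k(x),\tau_k(x))\subset(0,t)$, and (if present) the tail $(\tau(t,x),t]$. With this notation \eqref{eq22.1} reads
$$
f_t(x)=\prod_{J}\frac{e^{2|J|}+1}{2},
$$
where the product runs over the components $J$ of $P(x)$; it converges since the components' lengths sum to at most $t$.

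Next I would exploit Corollary \ref{corl1}, which gives $\{s\in[0,t]:\vf^2_s(y)=0\}\subset\{s\in[0,t]:\vf^2_s(x)=0\}$, hence $P(x)\subset P(y)$; every component $J$ of $P(x)$ then lies inside a unique component $J'$ of $P(y)$. Grouping the factors of $f_t(x)$ by which component $J'$ of $P(y)$ they belong to, for each fixed $J'$ the sub-components satisfy $\sum_{J\subset J'}|J|\leq|J'|$, so Lemma \ref{lem4} (item (1) for finitely many terms, item (2) for infinitely many) combined with monotonicity of $a\mapsto(e^{2a}+1)/2$ gives
$$
\prod_{J\subset J'}\frac{e^{2|J|}+1}{2}\leq\frac{e^{2\sum_{J\subset J'}|J|}+1}{2}\leq\frac{e^{2|J'|}+1}{2}.
$$
Taking the product over the components $J'$ of $P(y)$ then yields $f_t(x)\leq f_t(y)$.

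For strictness when $t>\sigma(x_1,x_2)$, Corollary \ref{corl1} promotes the inclusion of zero sets to a strict one, so there exists $s_0\in P(y)\setminus P(x)$; let $J'$ be the component of $P(y)$ through $s_0$. Since $s_0\in J'\setminus P(x)$, the components of $P(x)$ inside $J'$ cannot exhaust $J'$. I would then split into three cases: two or more components of $P(x)$ lie in $J'$ (strict inequality in the first step above by Lemma \ref{lem4}(1)); exactly one component $J\subsetneq J'$ lies there (strict inequality in the second step, since $|J|<|J'|$); or none lies there (the factor for $J'$ exceeds $1$ while the empty product for $P(x)$ equals $1$). In every case one factor in the product becomes strict, giving $f_t(x)<f_t(y)$. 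The only mildly delicate point is this case analysis; the rest is routine monotonicity plus Lemma \ref{lem4}, so I anticipate no serious obstacle.
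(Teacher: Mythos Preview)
Your proof is correct and follows essentially the same approach as the paper, which simply invokes Corollary~\ref{corl1} and Lemma~\ref{lem4} without spelling out details. Your repackaging of $f_t(x)$ as a product over the connected components of $\{s\in[0,t]:\vf^2_s(x)>0\}$, followed by the component-wise comparison via the nesting from Corollary~\ref{corl1} and the sub-multiplicativity from Lemma~\ref{lem4}, is exactly the intended argument; the strictness case analysis is also sound.
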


%
Therefore the first part of Theorem \ref{thm5} is proved. Let us verify the second part.

Let $x^0=(x^0_1, x^0_2)\in\mbR\times(0, \infty)$  and $t>0$ be arbitrary. At first let us prove that for a.a. 
$\omega\in\{\sigma(x^0)<t\}$  and for any $\ve>0$ there exists a point $\bar{x}_1\in(x_1^0-\ve,x_1^0)$  and an instant $\bar{t}\in(0;t)$ such that $\vf_.(\bar{x}_1,x_2^0)$ touches the abscissa axis at $\bar{t},$ i.e.
\begin{equation}
\label{eq27touch}
\vf^2_{\bar{t}}(\bar{x}_1,x_2^0)=0,\ \vf^2_{\bar{t}}({x}_1,x_2^0)=0, \ \mbox{for} \ x_1>\bar{x}_1.
\end{equation}
By Girsanov's theorem, the distribution of the process $\vf^2_t(x), t\in[0, T],$ is absolutely continuous with respect to  the distribution of reflected Wiener process that started from  $x_2.$ Hence for a.a.  $\omega$  the set
$\{t\in[0, T]: \vf^2_t(x)=0\}$  is a compact set of zero Lebesgue measure, and it does not have inner and isolated points.
 By Corollary \ref{corl1}, Lemma \ref{lem1}, and the absence of isolated points in the set $\{t\in[0, T]: \vf^2_t(x^0)=0\}$, it follows that for a.a.
$\omega\in\{\sigma(x^0)<t\}$ and any  $x=(x_1, x^0_2)$, $0<x_1<x^0_1,$ there are non-empty intervals
$(\sigma_k(x^0), \tau_k(x^0)),$
$(\sigma_j(x), \tau_j(x)),$  and
$(\sigma_l(x), \tau_l(x))$ (see \eqref{eq_dop32} for the definition of  $(\sigma_j(x), \tau_j(x))$)
  such that
\begin{equation}
\label{eq27.1}
\sigma_k(x_0)<\sigma_j(x)<\tau_j(x)<\sigma_l(x)<\tau_l(x)<\tau_k(x_0)<t.
\end{equation}
We will show the existence of $\bar{x}_1$ and $\bar{t}$ from \eqref{eq27touch} such that  $\bar{x}_1\in(x_1,x_1^0)$
and $\bar{t}\in [\tau_j(x),\sigma_l(x)].$

Put
$$
K_\alpha=\{s\in[0, t]: \vf^2_s(\alpha,x^0_2)=0\}\cap[\tau_j(x), \sigma_l(x)].
$$
Note that

1) $K_{\alpha_1}\subset K_{\alpha_2}, \alpha_1\geq\alpha_2$  (see Corollary \ref{corl1});

2) $K_{x^0_1}=\varnothing;$

3) $K_{x_1}\ne\varnothing.$

Denote $\ov{x}_1=\sup\{\mu: \ K_\mu\ne\varnothing\}.$ 
Since the intersection of centered compact sets is non-empty \cite{Kelli}, we have
$\cap_{\mu<\ov{x}_1}K_\mu\ne\varnothing.$

Suppose $\ov{t}\in \cap_{\mu<\ov{x}_1}K_\mu.$ Then
$\vf^2_{\ov{t}}(\mu,x^0_2)=0, \mu<\ov{x}_1,$ and $\vf^2_{\ov{t}}(\mu,x^0_2)>0, \mu>\ov{x}_1.$  From the continuity  of the flow
$\vf$  in a spatial argument it follows that
$\vf^2_{\ov{t}}(\ov{x}_1,x^0_2)=0.$

Denote $\ov{x}=(\ov{x}_1,x^0_2), \ov{x}^\ve=( \ov{x}_1+\ve,x^0_2).$
  Let
$[\ov{\sigma}^\ve, \ov{\tau}^\ve]$ be a segment from from the collection
$\{[\sigma_m(\ov{x}^\ve), \tau_m(\ov{x}^\ve)]\}$   such that $\bar{t}\in (\ov{\sigma}^\ve, \ov{\tau}^\ve).$ It follows from the choice of $\bar{x},\bar{t},$ that $\vf^2_{{t}}(\ov{x}^\ve)>0$ for all $t\in [\tau_j(x), \sigma_l(x)],$ so $[\tau_j(x), \sigma_l(x)]\subset [\ov{\sigma}^\ve, \ov{\tau}^\ve]. $
It follows from Lemma \ref{lem1} that any segment $[\sigma_i(\ov{x}), \tau_m(\ov{x})]$ is included in some segment from the collection $\{[\sigma_m(\ov{x}^\ve), \tau_m(\ov{x}^\ve)]\}$. Observe that any multiplier in the definition of the function $f_t$ is greater than 1. Therefore, by Lemma \ref{lem4} 
$$
f_t(\ov{x}^\ve)-f_t(\ov{x})\geq
$$
$$
\geq\frac{
e^{2(\ov{\tau}^\ve-\ov{\sigma}^\ve)}+1}{2}-
\frac{e^{2(\ov{\tau}^\ve-\ov{t})}+1}{2}\cdot
\frac{e^{2(\ov{t}-\ov{\sigma}^\ve)}+1}{2}=
$$
$$
=
\frac{(e^{2(\ov{\tau}^\ve-\ov{t})}-1)(e^{2(\ov{t}-\ov{\sigma}^\ve)}-1)}{4}
\geq
\frac{(e^{2(\tau_l(x)-\ov{t})}-1)(e^{2(\ov{t}-\sigma_j(x))}-1)}{4}>0.
$$
Thus
$$
f(\ov{x}_1+,x^0_2)-f(\ov{x}_1,x^0_2)>0.
$$
Theorem \ref{thm5} is proved.

\emph{Proof of Theorem \ref{thm5_1}.}  Let us remember that  for any $t$ and  $x$ we have the equality
\begin{equation}
\label{eq31.1}
f_t(x)= \frac{\pt\vf^1_t(x)}{\pt x_1}
 \end{equation}
for a.a.  $\omega\in \{\sigma(x)\leq t\}.$ A derivative cannot has a jump discontinuity, but a function $f_t(\cdot, x_2)$ has jump discontinuities in any neighborhood of $x$ and a.a.  $\omega\in \{\sigma(x)\leq t\}$ because of Theorem \ref{thm5}. So, if equality \eqref{eq31.1} be satisfied simultaneously for all  $x$ (independently of $\omega$), then this contradiction  would immediately imply the proof of the Theorem. Generally, a set of appropriate $\omega$ depends on $x$. So, to be accurate, we need several additional arguments.

   Fubini's theorem yields that for any  $x^0=(x^0_1, x^0_2), \ve>0$, and for a.a. $\omega\in\{\sigma(x^0)<t\}$ there are  $x_2, |x_2-x^2_0|<\ve,$ and $\delta>0$ such that  $\sigma(x)<t$, $
f_t(x_1,x_2)= \frac{\pt\vf^1_t(x_1,x_2)}{\pt x_1}$ for a.a.  $x_1\in[x_1^0-\delta;x_1^0+\delta]$ with respect to the Lebesgue measure, and $f_t(\cdot,x_2)$ is discontinuous on $(x_1^0-\delta, x_1^0+\delta)$.   The function  $f_t(\cdot, x_2)$ is monotonous on     $[x_1^0-\delta;x_1^0+\delta]$. Hence $\vf^1_t(\cdot, x_2)$ is concave on  $[x_1^0-\delta;x_1^0+\delta]$. So, left and right derivatives of  $\vf^1_t(\cdot, x_2)$ exist and are non-decreasing. Assume that the derivative in  $x_1$ exists for all $x_1\in[x_1^0-\delta;x_1^0+\delta]$. Then it must be monotonous and discontinuous, because the function $f_t(\cdot, x_2)$ is monotonous and discontinuous. This contradiction proves Theorem \ref{thm5_1}.

{\bf Remark.} It follows from the proof of Theorem \ref{thm5} that the absence of the derivative  $\nabla\vf_t(\cdot)$ on a dense subset of
$\{x: \sigma(x)<t\}$ is rather a rule than an exception.
%
%


\begin{thebibliography}{99}
\bibitem{Airault} \emph{Airault H.} Perturbations singulieres et solutions stochastiques de problemes de D. Neumann-Spencer. J. Math. Pures Appl. (9)  55  (1976), no. 3, 233–-267.


\bibitem{Anulova} Anulova S. V., Veretennikov A. Yu., Krylov N. V., Liptser R. Sh., Shiryaev A. N. Stochastic calculus. (Russian)  Current problems in mathematics. Fundamental directions, Vol. 45 (Russian),  5–253, Itogi Nauki i Tekhniki, Akad. Nauk SSSR, Vsesoyuz. Inst. Nauchn. i Tekhn. Inform., Moscow, 1989.





\bibitem{Andres1} \textit{Andres S.:} Pathwise differentiability for SDEs in a convex polyhedron with
oblique reflection, Ann. Inst. Henri Poincare Probab. Stat. 45
(2009), no. 1, 104--116.


\bibitem{Andres2} \textit{Andres S.:}   Pathwise Differentiability for SDEs in a Smooth Domain with Reflection/ Electronic Journal of Probability,
Vol. 16 (2011),  845--879.


\bibitem{Burdzy} \textit{Burdzy K.:}  Differentiability of stochastic flow of reflected Brownian motions.
Electron. J. Probab. 14 (2009), no. 75, 2182–2240.

\bibitem{DZ} \textit{Deuschel J.D., Zambotti L.:}
Bismut-Elworthy's formula and random walk representation for SDEs
with reflection, Stochastic Process. Appl. 115 (2005), no. 6,
907--925.

\bibitem{DCS}  \emph{Dunford N., Schwartz J.T. } Linear Operators. I. General Theory. Pure and Applied Mathematics, Vol. 7 Interscience Publishers, Inc., New York; Interscience Publishers, Ltd., London 1958 xiv+858 pp.


\bibitem{GS_SDE} \textit{Gihman I. I., Skorohod A. V.:}
 Stochastic differential equations,  Springer-Verlag, New York-Heidelberg, 1972. viii+354 pp.

\bibitem{IW}
\emph{Ikeda N., Watanabe S.:}   Stochastic differential equations and diffusion
processes, North Holland-Kodansha, Tokyo, 1981.


\bibitem{Kelli} \emph{Kelley J.L. } General topology.   D. Van Nostrand Company, Inc., Toronto-New York-London, 1955. xiv+298 pp.

\bibitem{Ku}
\emph{{ Kunita H.}}  {Stochastic flows and stochastic differential
equations}, Cambridge studies in advanced mathematics,1990.



\bibitem{PieraMazumdar}  \emph{Piera F.J., Mazumdar R.R.} Comparison results for reflected jump-diffusions in the orthant with variable reflection directions and stability applications. Electron. J. Probab.  13  (2008), no. 61, 1886–-1908.

\bibitem{PilROSE} \textit{Pilipenko A.Yu.:}  \ Flows generated by stochastic
equations with reflection, Random Oper. and  Stoch.  Equ.   Vol. 12,
(2004) No. 4,  389--396.


\bibitem{PilDAN} \textit{Pilipenko A. Yu.:}  Stochastic
flows with reflection. (Russian) Dopov. Nats. Akad. Nauk Ukr. Mat.
Prirodozn. Tekh. Nauki 2005, no. 10, 23--28, (Translation, see
arXiv:0810.4644 )

\bibitem{PilTBiMC} \textit{{Pilipenko A. Yu.:} }
On the generalized differentiability with initial data of a flow
generated by a stochastic equation with reflection. (Ukrainian)
Teor. Imovir. Mat. Stat. No. 75
 (2006), 127--139; translation in Theory Probab. Math. Statist. No. 75 (2007),
 147--160.



\bibitem{PilHaus}  \textit{{Pilipenko A. Yu.:} }
Properties of flows generated by stochastic
 equations with reflection. (Russian) Ukrain. Mat. Zh. 57 (2005), no. 8, 1069--1078, translation in Ukrainian Math. J. 57 (2005), no. 8, 1262--1274.

\bibitem{Pil_Cosa} \textit{Pilipenko A. Yu.:}  On differentiability of stochastic reflecting flow with respect to starting point (accepted to Communications of Stochastic Analysis).





\end{thebibliography}
\end{document}